\newtheorem{Theorem}{\sc Theorem}%%%[section]
\newtheorem{Definition}[Theorem]{\sc Definition}
\newtheorem{Proposition}[Theorem]{\sc Proposition}
\newtheorem{Lemma}[Theorem]{\sc Lemma}
\newtheorem{Remark}[Theorem]{\sc Remark}
\newtheorem{Example}[Theorem]{\sc Example}
\newtheorem{Problem}[Theorem]{\sc Problem}
\newcommand{\R}{{\if mm {\rm I}\mkern -3mu{\rm R}\else \leavevmode
\hbox{I}\kern -.17em\hbox{R} \fi}}
\def\sqr#1#2{{
    \vcenter{
         \vbox{\hrule height.#2pt
               \hbox{\vrule width.#2pt height#1pt \kern#1pt
                     \vrule width.#2pt
               }
               \hrule height.#2pt
         }
    }
}}
\def\bar{\overline}
\def\real{\mathbb{R}}
\def\nat{\mathbb{N}}
\def\lista#1
\begin{document}
\title{
A Class of Elliptic Quasi--Variational--Hemivariational Inequalities with Applications
%% with Constraints
}
		
\author{
Stanis{\l}aw Mig\'orski\footnote{\,College of Applied Mathematics, Chengdu University of Information Technology, Chengdu, 610225, Sichuan Province, P.R. China, and  Jagiellonian University in Krakow, Chair of Optimization and Control, ul. Lojasiewicza 6, 30348 Krakow, Poland.
Tel.: +48-12-6646666. E-mail address: stanislaw.migorski@uj.edu.pl.},\ \ Jen-Chih Yao\footnote{\, Center for General Education, China Medical University, Taichung, Taiwan. E-mail address: yaojc@mail.cmu.edu.tw.}
\ \ \mbox{and} \ \
Shengda Zeng\footnote{\,Guangxi Colleges and Universities Key Laboratory of Complex System Optimization and Big Data Processing, Yulin Normal University, Yulin 537000, Guangxi, P.R. China, and Jagiellonian University in Krakow,
Faculty of Mathematics and Computer Science, ul. Lojasiewicza 6, 30348 Krakow, Poland.
Tel.: +86-18059034172. Corresponding author.
E-mail address: zengshengda@163.com.}
}

\date{}
\maketitle
	
\noindent {\bf Abstract.} \
In this paper we study a class of quasi--variational--hemi\-va\-ria\-tio\-nal inequalities
in reflexive Banach spaces.
The inequalities contain a convex potential,
a locally Lipschitz superpotential, and a solution-dependent
set of constraints.
Solution existence and compactness of the solution set to the inequality problem are established based on the Kakutani--Ky
Fan--Glicksberg  fixed point theorem. Two examples of the interior and boundary semipermeability models illustrate the applicability of our results.

\smallskip
	
\noindent
{\bf Key words.} Variational--hemivariational inequality, variational inequality, Clarke subgradient, Mosco convergence, fixed point.
	
\smallskip
	
\noindent
{\bf 2010 Mathematics Subject Classification. } 35L15, 35L86, 35L87.

\section{Introduction}
%%%%%%%%%%%%%%%%%%%%%%

In this paper we study a quasi--variational--hemivariational inequality of elliptic type of the following form.
%%\begin{Problem}\label{AAA}
Find $u \in C$ such that $u \in K(u)$ and
\begin{equation}\label{AAA}
	\langle A u - f, z - u \rangle
	+ \varphi (z) - \varphi (u)
	+ j^0(Mu; Mz - Mu) \ge 0
	\ \ \mbox{\rm for all} \ \ z \in K(u).
	\end{equation}
%%\end{Problem}
\noindent
Here,
$C$ is a subset of a reflexive Banach space $V$,
$A \colon V \to V^*$ is a nonlinear operator,
$K \colon C \to 2^C$ is a set-valued mapping,
$M \colon V \to X$ is linear and compact,
where $X$ is a Hilbert space,
$\varphi \colon V \to \real$ is a convex function (potential),
$j \colon X \to \real$ is a locally Lipschitz
potential where $j^0(u; z)$ denotes the generalized directional derivative of $j$ at point $u\in X$ in the direction $z\in X$,
and $f \in V^*$.

The motivation to study inequality (\ref{AAA}) comes from modeling
of physical phenomena for semipermeability problems of flow through porous media:
monotone relations are described by variational
inequalities~\cite{Barbu,DL},
and nonmonotone laws are governed by hemivariational
inequalities~\cite{MO2004,NP,Pana1,P}, see bibliographies therein.
In these references, semipermeability on the boundary or in the domain is considered.
%Such problems arise in several situations in hydraulic,
%flow problems through porous media and electrostatics,
%the solution represents the pressure and the electric
%potential.
Recently, various particular forms of inequality (\ref{AAA})
have been treated for nonconvex semipermeability problems in~\cite{MKZ2020,migorskibaizengNA2019,zengmigorskiliuyao2020} as well as for nonsmooth contact problems of mechanics of solids and fluids, see~\cite{DuMi,Hung2022JCAM,Hung2020FSS,SofMatei,SMHistory,zengmigorskikhanSICON2021}.

To highlight the general form of our problem, we list the following particular cases of problem (\ref{AAA}).
\begin{itemize}
\item[(i)] If $j = 0$ and $K$ is independent of the solution, then problem~(\ref{AAA}) reduces to the elliptic variational inequality of the first kind studied in~\cite{LS,SofMatei}:
$$
u \in K, \quad
\langle A u, v - u \rangle + \varphi (v) - \varphi (u) \ge \langle f, v - u \rangle \quad \mbox{for all} \ v \in K.
$$

\item[(ii)]
When $j=0$ and $K=V$, problem~(\ref{AAA}) takes the form of the elliptic variational inequality of the second kind investigated in~\cite{Brezis,LS,Mosco,SofMatei}:
$$
u \in V, \quad
\langle A u, v - u \rangle + \varphi (v) - \varphi (u) \ge \langle f, v - u \rangle \quad \mbox{for all} \ v \in V.
$$

\item[(iii)]
For $j=0$, problem~(\ref{AAA}) reduces to the elliptic quasi-variational inequality treated in~\cite{MKZ}:
find $u\in C$ such that $u\in K(u)$ and
$$
\langle Au,v-u\rangle +\varphi(v)-\varphi(u)
\ge \langle f,v-u\rangle
\ \ \mbox{for all} \ \ v\in K(u).
$$

\item[(iv)]
While $j = 0$, $K$ independent of $u$,
and $\varphi \equiv 0$, then
problem~(\ref{AAA}) becomes the elliptic variational inequality of the form
$$
u \in K, \quad
\langle A u, v - u \rangle \ge
\langle f, v - u \rangle
\ \ \mbox{for all} \ \ v \in K,
$$
which has been considered in \cite{Brezis,Browder,Kind-St,LS, Mosco}.
\item[(v)]
If $\varphi=0$ and $K =V$, problem~(\ref{AAA})
takes the form of elliptic hemivariational inequality of the form
$$
u \in V, \quad
\langle A u, v \rangle + j^0(u; v) \ge \langle f, v \rangle \quad \mbox{for all} \ v \in V,
$$
\noindent
this has been
studied in~\cite{NP}.
\end{itemize}

Elliptic variational and quasi-variational inequalities
have been studied in~\cite{khanGiannessi2000,Khan,LZ},
variational-hemivariational inequalities have been considered  in~\cite{liu4,liu2,MOS30,ZLM}, while
quasi--hemivariational inequalities have been treated
only recently in~\cite{MKZ2020,zengmigorskikhanSICON2021},
and applications to implicit obstacle problems can be found in~\cite{MKZ,MNZ}.

We underline that under the relaxed monotonicity condition of the subgradient operator $\partial j$ (see Remark~\ref{RRCC} in Section~\ref{section4}), problem~(\ref{AAA}) has been studied only recently in~\cite{MKZ2020} by the Kluge fixed point theorem applied to a set--valued variational selection for the set of constraints.
However, a question arises on how to prove the existence of a solution to the elliptic quasi--variational--hemivariational inequality (\ref{AAA}) without the relaxed monotonicity condition
on the subgradient operator $\partial j$.
The main aim of the current paper is to  give a positive
answer to this open question.
Besides, the method applied in this paper is different
from that used in~\cite{MKZ2020}. More precisely, our approach
is based on the Kakutani--Ky
Fan--Glicksberg  fixed point theorem and a convergence theorem for variational inequalities.
Note that two classes of general semipermeability problems studied in Section~\ref{section5} lead in their weak formulation to inequality (\ref{AAA}) where the operator $M$ is either the embedding operator or the trace operator. The second case for boundary semipermeability problems can not be treated by the approach used previously in~\cite{MKZ2020}.

Finally, we also stress that our results are applicable
to a wide spectrum of problems met in contact mechanics, e.g.,
a nonlinear elastic contact problem with normal compliance condition with a unilateral constraint, and a contact problem with the Coulomb friction law in which the friction bound is supposed to depend on
the normal displacement,
studied, e.g., in~\cite{zengmigorskikhanSICON2021}.
%%[5, 55, 66].
For various interesting applications, we refer
to~\cite{AlphonseHintermullerRautenbergCVPDEs2018,BC,DL,GrunewaldKimCVPDEs2019, HMS,HMS_ED,DuMi, SofMatei}.

\section{Notation and preliminary material}\label{Preliminaries}

\noindent
In this section we fix a basic notation and recall some concepts and  facts we need in this paper, details can be found  in~\cite{Clarke,DMP1,MOSbook}.

Let $(Y, \|\cdot\|_Y)$ be a Banach space,
$Y^*$ be its dual space and  $\langle\cdot,\cdot\rangle$ denote the duality bracket between $Y^*$ and $Y$.
Given a locally Lipschitz function
$j \colon Y \to \real$,
the generalized subgradient of $j$ at $x\in Y$
is defined by
\begin{equation*}
\partial j (x) = \{\, x^* \in Y^* \mid
{\langle x^*, v \rangle} \le j^{0}(x; v)
\ \ \mbox{for all} \ \ v \in Y \, \},
\end{equation*}
where the generalized directional derivative
of $j$ at $x \in Y$ in the direction $v \in Y$ is
given by
\begin{equation*}
j^{0}(x; v) = \limsup_{y \to x, \ \lambda \downarrow 0}
\frac{j(y + \lambda v) - j(y)}{\lambda}.
\end{equation*}
Let $\phi \colon Y \to \real$ be a convex function and
$x \in Y$. An element $x^* \in Y^*$ is called a subgradient of $\phi$ at $x$ if and only if the following inequality holds
$$
\phi (v) \ge \phi (x) + \langle x^*, v - x\rangle
\ \ \mbox{for all} \ \ v \in Y.
$$
The set of all $x^* \in Y^*$ that satisfy this inequality is called
the subdifferential of $\phi$ at $x$, and it denoted by
$\partial_c \phi (x)$.

We recall the basic definitions for single-valued operators. Let $A \colon Y \to Y^*$.
%
%\smallskip
%
%\noindent (a) \
An operator $A$ is called bounded if it maps bounded
sets of $Y$ into bounded sets of $Y^*$.
%
%\smallskip
%
%\noindent (b) \
$A$ is called monotone if
\begin{eqnarray*}
\langle Av_1 - A v_2, v_1-v_2 \rangle \ge 0
\mbox{ for all }v_1, v_2 \in Y.
\end{eqnarray*}
%
%\smallskip
%
%\noindent
%(c) \
An operator $A$ is called $m$-strongly monotone if
\begin{eqnarray*}
\langle Av_1 - A v_2, v_1-v_2 \rangle \ge
m \| v_1 - v_2\|^2
\mbox{ for all }v_1, v_2 \in Y.
\end{eqnarray*}
%
%\smallskip
%
%\noindent(d) \
$A$ is called coercive if
\begin{eqnarray*}
\langle A v, v\rangle \ge \alpha(\| v \|)\| v \|
\mbox{ for all } v \in V,
\end{eqnarray*}
where $\alpha \colon \real_+ \to \real$ is such that $\alpha(s) \to +\infty$ as $s \to +\infty$.
%
%\smallskip
%
%\noindent (e) \
An operator $A$ is called pseudomonotone,
if it is bounded and
if $u_n \to u$ weakly in $Y$ and
$\displaystyle \limsup \langle A u_n, u_n -u \rangle \le 0$
implies
\begin{eqnarray*}
\displaystyle \langle A u, u - v \rangle
\le \liminf \langle A u_n, u_n - v \rangle\mbox{ for all }v \in Y.
\end{eqnarray*}

The space of linear bounded operators from
a Banach space $E$ to a Banach space $F$
is denoted by ${\mathcal L}(E, F)$,
it is a Banach space endowed
with the usual norm
$\| \cdot \|_{{\mathcal L}(E, F)}$.
For a subset $D$ of a normed space $Y$,
we write
$\| D \|_Y = \sup \{ \| u \|_Y \mid u \in D \}$.
The symbol $Y_w$ is used for the space $Y$ endowed with the weak topology, and $2^Y$ stands for the set of all subsets of $Y$.
For a set--valued map $F\colon X_1 \to 2^{X_2}$ between topological spaces $X_1$ and $X_2$,
the graph of $F$ is the set
${\rm Gr}(F) = \{ (x, y) \in X_1 \times X_2 \mid y \in F(x) \}$.

%An operator $A$ is called hemicontinuous if the %real valued function
%$t\to \langle A(u+tv),w\rangle_{X^*\times X}$ %is continuous
%on $[0,1]$ for any $u$, $v$, $w\in X$.
%
%Recall that if $X$ is reflexive Banach space, %then every bounded, hemicontinuous and monotone %operator is pseudomonotone (cf. %Proposition~27.6(a) in~\cite{Zeidler_2B}).

We recall a definition of Mosco convergence, see~\cite{Mosco}.
\begin{Definition}\label{DefMosco}
Given a normed space $Y$, a sequence $\{ C_n \}$ of closed and convex sets in $Y$, is said to converge to a closed and convex set $C \subset Y$ in the Mosco sense, denoted by
$C_n \ \stackrel{M} {\longrightarrow} \ C$
as $n \to \infty$, if we have

\medskip

{\rm (m1)}\ \
if $\{ n_k \}$ is a sequence of indices converging to $\infty$,
$\{ z_k \}$ is a sequence

\qquad \ \
such that $z_{k} \in C_{n_k}$ for every $k$ and $z_{k} \to z$ weakly in $Y$, then
$z \in C$,

\smallskip

{\rm (m2)}\ \
for every $z \in C$, there exists a sequence $z_n \in C_n$ with $z_n \to z$ in $Y$.
\end{Definition}

There is an alternative definition of Mosco convergence defined in terms of Kuratowski limits~\cite[Section 4.7]{DMP1}.
We end the section by recalling the  Kakutani--Ky
Fan--Glicksberg  theorem for a reflexive Banach space,
see e.g.~\cite[Theorem~2.6.7]{PK}.
\begin{Theorem}\label{KKF}
Let $Y$ be a reflexive Banach space and
$D \subseteq Y$ be a nonempty, bounded, closed and convex set. Let $\Lambda \colon D \to 2^D$
be a set-valued map with nonempty, closed and convex values such that its graph is sequentially closed in $Y_w \times Y_w$ topology.
Then $\Lambda$ has a fixed point.
\end{Theorem}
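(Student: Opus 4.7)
The plan is to deduce the result from the classical Ky Fan--Glicksberg fixed point theorem for upper semicontinuous multimaps on compact convex subsets of a Hausdorff locally convex topological vector space, working with $Y$ endowed with its weak topology. In this setting three things must be verified: (a) $D$ is a compact convex subset of $Y_w$, (b) each value $\Lambda(x)$ is nonempty, convex and weakly compact, and (c) $\Lambda \colon D_w \to 2^{D_w}$ is upper semicontinuous.

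For (a), convexity of $D$ is assumed, and weak compactness follows from reflexivity of $Y$: by Kakutani's theorem, bounded weakly closed sets of a reflexive Banach space are weakly compact, and by Mazur's theorem a norm--closed convex set is weakly closed. For (b), $\Lambda(x)$ is nonempty and convex by hypothesis, and its norm--closedness again yields weak closedness via Mazur, hence weak compactness as a weakly closed subset of the weakly compact set $D$. For (c), I would upgrade the sequential closedness of ${\rm Gr}(\Lambda)$ to topological closedness in $D_w \times D_w$: since ${\rm Gr}(\Lambda) \subseteq D \times D$ and $D \times D$ is weakly compact, the Eberlein--\v{S}mulian theorem ensures that weak sequential closedness and weak topological closedness coincide on this set. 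In the compact Hausdorff space $D_w$, a multifunction with a closed graph and nonempty compact values is automatically upper semicontinuous, giving (c). With (a)--(c) in hand, the Ky Fan--Glicksberg theorem produces the desired fixed point $u \in \Lambda(u)$.

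The main obstacle is bridging the sequential hypothesis in the statement and the topological machinery required by Fan--Glicksberg; the Eberlein--\v{S}mulian theorem is the decisive tool that makes sequences an adequate substitute for nets in this Banach space setting, and its applicability hinges precisely on the fact that everything relevant sits inside the weakly compact set $D$. A minor but easy-to-overlook subtlety is that the closed, convex values of $\Lambda$ must be promoted to weakly closed (hence weakly compact) values before the closed-graph--versus--upper-semicontinuity equivalence can be invoked; this is again an immediate consequence of Mazur's theorem.
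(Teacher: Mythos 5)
Your argument is correct, but note that the paper does not actually prove this statement: it is recalled as a known result with a citation to Papageorgiou and Kyritsi (Theorem~2.6.7 of their Handbook), so there is no internal proof to compare against, and what you have written is essentially the standard derivation such a reference would contain. Two points deserve to be made explicit to make your write-up airtight. First, the hypothesis is only \emph{sequential} closedness of ${\rm Gr}(\Lambda)$ in $Y_w\times Y_w$, and the bridge to genuine weak closedness needs the strong (``angelic'') form of the Eberlein--\v{S}mulian theorem: every point of the weak closure of a relatively weakly compact subset of a Banach space is the weak limit of a sequence from that subset. You should apply this in the reflexive space $Y\times Y$, whose weak topology restricted to $D\times D$ coincides with the product of the weak topologies because $(Y\times Y)^*=Y^*\times Y^*$; since ${\rm Gr}(\Lambda)\subseteq D\times D$ is relatively weakly compact, sequential closedness then upgrades to closedness exactly as you indicate. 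Second, once the graph is closed in the compact Hausdorff space $D_w\times D_w$, the values are automatically weakly compact and $\Lambda$ is upper semicontinuous there, so the classical Fan--Glicksberg theorem in the Hausdorff locally convex space $Y_w$ yields the fixed point; your use of Mazur's theorem to pass from norm-closed convex sets to weakly closed ones, and of the reflexivity criterion for the weak compactness of $D$, is exactly right. In short, the proposal is a correct and complete proof of the quoted theorem, supplying the argument the paper delegates to the literature.
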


\section{Well-posedness result for variational  inequalities}\label{section3}

In this section, we formulate an abstract elliptic variational inequality with constraints
for which we provide results on its unique solvability
and convergence under the perturbation on the data.
We will need such results in Section~\ref{section4} to investigate a class of quasi--variational--hemivariational inequalities.

Let $Y$ be a reflexive Banach space with a norm $\| \cdot \|$, $Y^*$ be its dual space, $\langle \cdot, \cdot \rangle$ denote the
the duality brackets for the pair $Y^*$ and $Y$, $E$ be a nonempty, closed and convex subset of $Y$.

Consider the following elliptic variational inequality of the first kind.
\begin{Problem}\label{Problem1}
{\it Find an element $u \in E$ such that}
\begin{equation*}
%%\left\{
%\begin{array}{lll}
\langle A u - g, z - u \rangle
+\varphi (z) - \varphi (u)
\ge 0 \ \ \mbox{\rm for all} \ \ z \in E.
%\end{array}
%%\right.
\leqno{(P)}
\end{equation*}
\end{Problem}
We impose the following hypotheses on the data of Problem~\ref{Problem1}.
\begin{eqnarray}\label{Hyp1}
\left\{\begin{array}{lll}
A \colon Y \to Y^* \ \mbox{is pseudomonotone and $m$-strongly monotone}, \\ [2mm]
E \ \mbox{is a nonempty, closed, convex subset of} \ Y, \\[2mm]
\varphi \colon Y \to \real \ \mbox{is convex and lower semicontinuous}, \\ [2mm]
g \in Y^*.
\end{array}\right.
\end{eqnarray}

\begin{Lemma}\label{Lemma1}
Under the hypothesis {\rm (\ref{Hyp1})}, Problem~\ref{Problem1} has a unique solution $u \in E$.
\end{Lemma}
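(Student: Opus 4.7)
The plan is to treat uniqueness and existence separately, with uniqueness being a quick consequence of $m$-strong monotonicity and existence following from the surjectivity theory for pseudomonotone perturbations of maximal monotone operators.

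For uniqueness, suppose $u_1, u_2 \in E$ both solve $(P)$. Taking $z = u_2$ in the inequality for $u_1$ and $z = u_1$ in the inequality for $u_2$, adding the two and cancelling $\varphi(u_1) - \varphi(u_2)$ terms yields $\langle A u_1 - A u_2, u_1 - u_2 \rangle \le 0$, whence $m \|u_1 - u_2\|^2 \le 0$ and $u_1 = u_2$.

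For existence, the plan is to absorb the constraint set into the potential by setting $\psi = \varphi + I_E$, where $I_E$ is the indicator of $E$. Since $E$ is nonempty, closed, convex and $\varphi$ is convex and lower semicontinuous, $\psi \colon Y \to \real \cup \{+\infty\}$ is proper, convex, and lower semicontinuous. Problem $(P)$ is then equivalent to the inclusion $g \in A u + \partial_c \psi(u)$, i.e.\ finding $u \in Y$ with $g - A u \in \partial_c \psi(u)$. The operator $\partial_c \psi \colon Y \to 2^{Y^*}$ is maximal monotone, while $A$ is single-valued, bounded, and pseudomonotone. By the classical surjectivity result of Browder for the sum of a bounded pseudomonotone operator and a maximal monotone operator on a reflexive Banach space, it suffices to check coercivity of $A + \partial_c \psi$. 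Since $\psi$ is convex and lower semicontinuous, it admits an affine minorant $\psi(v) \ge \langle v^*_0, v\rangle + c_0$ for some $v^*_0 \in Y^*$, $c_0 \in \real$. Combining this with $m$-strong monotonicity written as $\langle Av, v\rangle \ge m \|v\|^2 - \|A0\|_{Y^*} \|v\|$, one obtains that, for any $u_0 \in E$ with $u_0^* \in \partial_c \psi(u_0)$, the inequality
\begin{equation*}
\frac{\langle A v - g, v - u_0\rangle + \psi(v) - \psi(u_0)}{\|v - u_0\|} \to +\infty \quad \text{as } \|v\| \to \infty
\end{equation*}
holds, which is precisely the coercivity hypothesis needed to apply the surjectivity theorem; this yields a solution $u \in Y$ to $g \in Au + \partial_c \psi(u)$, and since $\partial_c \psi$ is nonempty only on $E$, automatically $u \in E$.

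The only mildly delicate point is the coercivity verification, where one must handle the lower bound on $\psi$ carefully so that the affine minorant does not spoil the quadratic growth coming from $A$; this is routine given that $m > 0$. Everything else is an appeal to standard facts, so no serious obstacle is anticipated, and an entirely parallel proof via Galerkin approximation on finite-dimensional subspaces combined with Minty's lemma for the convex-potential term is available as an alternative.
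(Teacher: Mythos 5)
Your argument is correct, but it follows a different route from the paper. The paper's proof of Lemma~\ref{Lemma1} is essentially a citation: existence and uniqueness are deduced directly from Theorem~3.2 of the reference \cite{liuzengmigorski}, and the only thing verified in the text is that $m$-strong monotonicity yields the coercivity estimate $\langle Av,v\rangle \ge m\|v\|^2 - \|A0\|_{Y^*}\|v\|$ required by that theorem. You instead reconstruct the result from first principles: uniqueness by the standard two-test-function trick with strong monotonicity, and existence by absorbing the constraint into the potential via $\psi=\varphi+I_E$, rewriting $(P)$ as the inclusion $g\in Au+\partial_c\psi(u)$, and invoking Browder's surjectivity theorem for a bounded pseudomonotone operator plus the maximal monotone operator $\partial_c\psi$ (maximal monotonicity by Rockafellar's theorem), with the coercivity check again resting on strong monotonicity together with an affine minorant of $\psi$; the final observation that $\partial_c\psi(u)\neq\emptyset$ forces $u\in E$ closes the loop. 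Both arguments ultimately rest on the same two ingredients (pseudomonotone surjectivity theory and coercivity from strong monotonicity), so the mathematical content is parallel; what your version buys is self-containedness and an explicit uniqueness proof, at the cost of having to be slightly careful that the subdifferential here is that of an extended-real-valued proper convex lsc function (the paper only defines $\partial_c$ for finite-valued $\phi$) and of naming precisely which version of the Browder-type theorem (bounded pseudomonotone $+$ maximal monotone, with coercivity relative to a point of $D(\partial_c\psi)$) is being used. The paper's version is shorter but delegates all of this to the cited result.
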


\begin{proof}
It is a consequence
%%of~\cite[Theorem~??]{Mosco}
of~\cite[Theorem~3.2]{liuzengmigorski}.
We only note that if $A$ is strongly monotone with constant $m > 0$, i.e.,
$$
\langle A v_1 - Av_2, v_1 - v_2 \rangle
\ge m\, \| v_1 - v_2 \|^2
\ \ \mbox{for all} \ v_1, v_2 \in Y,
$$
then $A$ is coercive too:
$$
\langle A v, v \rangle =
\langle Av - A0, v \rangle + \langle A0, v \rangle \ge
m \, \| v \|^2 + \| A0 \|_{Y^*}\| v \|
$$
for all $v \in Y$.
\end{proof}

The following result is the well--known Minty formulation and it follows from~\cite[Lemma~4.1]{Mosco}.
\begin{Lemma}\label{Lemma2}
Let $A\colon Y \to Y^*$, $E \subset Y$,
$\varphi \colon Y \to \real$ and $g \in Y^*$.
\begin{itemize}
\item[{\rm(i)}]
If $A$ is monotone, then any solution to the inequality
\begin{equation}\label{alpha1s}
u \in E: \
\langle A u - g, z - u \rangle
+\varphi (z) - \varphi (u)
\ge 0 \ \ \mbox{\rm for all} \ \ z \in E
\end{equation}
is also a solution to the following problem
\begin{equation}\label{beta1s}
u \in E: \
\langle A z - g, z - u \rangle
+\varphi (z) - \varphi (u)
\ge 0 \ \ \mbox{\rm for all} \ \ z \in E.
\end{equation}
\item[{\rm(ii)}]
If $A$ is hemicontinuous, $E$ is convex and $\varphi$ is convex, then any solution of (\ref{beta1s}) is a solution to (\ref{alpha1s}).
\end{itemize}
\end{Lemma}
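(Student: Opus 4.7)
This is the classical Minty lemma for variational inequalities with a convex potential, so both directions follow standard patterns.

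For part (i), the plan is immediate: assume $u \in E$ satisfies (\ref{alpha1s}), and for arbitrary $z \in E$ invoke monotonicity in the form
$$\langle Az - Au, z - u \rangle \ge 0,$$
so that $\langle Az - g, z - u \rangle \ge \langle Au - g, z - u \rangle$. Adding $\varphi(z) - \varphi(u)$ to both sides and using (\ref{alpha1s}) directly yields (\ref{beta1s}). No obstacle here.

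For part (ii), I would use the standard convex combination trick. Fix an arbitrary $z \in E$ and, exploiting convexity of $E$, set $z_t = u + t(z - u) \in E$ for $t \in (0,1)$. Applying (\ref{beta1s}) with test element $z_t$ and noting that $z_t - u = t(z-u)$, I obtain
$$t \langle A z_t - g, z - u \rangle + \varphi(z_t) - \varphi(u) \ge 0.$$
Convexity of $\varphi$ gives $\varphi(z_t) - \varphi(u) \le t(\varphi(z) - \varphi(u))$, so dividing by $t > 0$ yields
$$\langle A z_t - g, z - u \rangle + \varphi(z) - \varphi(u) \ge 0.$$
It then remains to pass to the limit $t \downarrow 0^+$. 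Here hemicontinuity of $A$ is exactly the property needed: the map $t \mapsto \langle A(u + t(z-u)), z - u \rangle$ is continuous on $[0,1]$, hence $\langle Az_t, z - u \rangle \to \langle Au, z - u \rangle$. The limit inequality is precisely (\ref{alpha1s}).

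The only subtle point is the last passage to the limit, and it is handled cleanly by hemicontinuity along the single line segment $[u, z]$; no stronger continuity of $A$ is required, and the convexity of $\varphi$ is used only in the one-sided estimate above, not in any full continuity argument. Both parts are short; I anticipate no real obstacle beyond correctly setting up the segment $z_t$ and invoking the standing hypotheses in (ii) at the right places.
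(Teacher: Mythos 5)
Your proof is correct and is precisely the classical Minty argument: for (i) you add the monotonicity inequality $\langle Az - Au, z-u\rangle \ge 0$ to (\ref{alpha1s}), and for (ii) you test (\ref{beta1s}) with $z_t = u + t(z-u)$, use convexity of $\varphi$ to get the one-sided estimate, divide by $t$, and let $t \downarrow 0$ via hemicontinuity along the segment. The paper gives no proof of its own --- it simply cites Lemma~4.1 of Mosco --- and your argument is exactly the standard proof behind that citation, so there is no discrepancy to address.
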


We will show the result of the continuous dependence of solution to Problem~$(P)$ on the data $(E, g)$. To this end,
we consider the perturbed elliptic variational inequality
$(P_n)$ for $n \in \nat$.
\begin{Problem}\label{Problem1n}
{\it Find an element $u_n \in E_n$ such that}
	\begin{equation*}
	%%\left\{
	\begin{array}{lll}
	\langle A u_n - g_n, z - u_n \rangle
	+\varphi (z) - \varphi (u_n)
	\ge 0 \ \ \mbox{\rm for all} \ \ z \in E_n.
	\end{array}
	%%\right.
	\leqno{(P_n)}
	\end{equation*}
\end{Problem}
Also, we impose the following hypotheses.
\begin{eqnarray}\label{Hyp1n}
\left\{\begin{array}{lll}
(a) \
%%\mbox{the sets} \
E_n \ \mbox{are nonempty, closed, and convex subsets of} \ Y \mbox{ such that }
E_n \ \stackrel{M} {\longrightarrow} \ E,
\\[2mm]
(b) \
g_n \in Y^*, \ g_n \to g \ \mbox{in} \ Y^*.
%\\[2mm]
% (c)\\mbox{for any}\ v \in Y, \mbox{there is} \ v_n \in Y \ \mbox{such that} \ v_n \to v \ \mbox{in} \ Y \ \mbox{and} \ Av_n \to Av \ \mbox{in} \ Y^*.
\end{array}\right.
\end{eqnarray}

\begin{Proposition}\label{Prop1}
Under hypotheses (\ref{Hyp1}) and (\ref{Hyp1n}),
for each $n \in \nat$, Problem~$(P_n)$ %\ref{Problem1n}
has a unique solution $u_n \in E_n$
such that the sequence $\{ u_n \}$ converges in $Y$,
as $n \to \infty$, to the unique solution $u \in E$ of Problem~(P).
%\ref{Problem1}.	
\end{Proposition}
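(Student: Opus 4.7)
The existence and uniqueness of $u_n \in E_n$ for each $n$ is immediate: by~(\ref{Hyp1n}) the data $(A, E_n, \varphi, g_n)$ satisfies all assumptions in~(\ref{Hyp1}), so Lemma~\ref{Lemma1} produces a unique $u_n$. The substance of the proposition is the convergence $u_n \to u$, which I plan to obtain in four steps via an approximation-compactness argument driven by the Mosco convergence of $\{E_n\}$ and the pseudomonotonicity of $A$.

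First I would derive an a priori bound on $\{u_n\}$. Fix $z^0 \in E$ and, invoking property (m2), pick $z_n^0 \in E_n$ with $z_n^0 \to z^0$ in $Y$. Testing $(P_n)$ against $z_n^0$ and exploiting $m$-strong monotonicity yields
\begin{equation*}
m\|u_n - z_n^0\|^2 \le \langle g_n - A z_n^0,\, u_n - z_n^0\rangle + \varphi(z_n^0) - \varphi(u_n).
\end{equation*}
Since $A$ is bounded, $\{g_n\}$ is convergent, $\{z_n^0\}$ is bounded, and $\varphi$ is continuous on $Y$ (being convex, finite, and lsc on a Banach space) hence minorized by a continuous affine functional, the right-hand side grows at most linearly in $\|u_n - z_n^0\|$. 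Thus $\{u_n\}$ is bounded, and along a subsequence $u_{n_k} \rightharpoonup u^*$ weakly in $Y$; property (m1) gives $u^* \in E$.

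Next I would pass to the limit in $(P_{n_k})$ to show $u^*$ solves $(P)$. Picking $w_k \in E_{n_k}$ with $w_k \to u^*$ by (m2) and rearranging $(P_{n_k})$ with $z = w_k$ in the form
\begin{equation*}
\langle A u_{n_k},\, u_{n_k} - u^*\rangle \le \langle g_{n_k},\, u_{n_k} - w_k\rangle + \varphi(w_k) - \varphi(u_{n_k}) + \langle A u_{n_k},\, w_k - u^*\rangle,
\end{equation*}
I bound each right-hand term: the first converges to $0$ (strong $g_n \to g$, weak $u_{n_k} - w_k \rightharpoonup 0$), the second to $\varphi(u^*)$ by continuity of $\varphi$, the third satisfies $\limsup(-\varphi(u_{n_k})) \le -\varphi(u^*)$ by weak lower semicontinuity, and the fourth vanishes since $\{Au_{n_k}\}$ is bounded and $w_k \to u^*$ strongly. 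This yields $\limsup \langle A u_{n_k}, u_{n_k} - u^*\rangle \le 0$, the precise hypothesis for pseudomonotonicity. For arbitrary $z \in E$, repeating the rearrangement with $z_k \in E_{n_k}$, $z_k \to z$, and combining the resulting $\limsup$ bound with the $\liminf$ inequality $\langle A u^*, u^* - z\rangle \le \liminf \langle A u_{n_k}, u_{n_k} - z\rangle$ supplied by pseudomonotonicity, gives
\begin{equation*}
\langle A u^* - g,\, z - u^*\rangle + \varphi(z) - \varphi(u^*) \ge 0 \quad \text{for every } z \in E.
\end{equation*}
By uniqueness in Lemma~\ref{Lemma1}, $u^* = u$; since every weak cluster point agrees with $u$, the whole sequence $\{u_n\}$ converges weakly to $u$.

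Finally, I would upgrade weak to strong convergence using strong monotonicity. Applying the computation of Step~3 with $u^* = u$ gives $\limsup \langle A u_n, u_n - u\rangle \le 0$; since $\langle A u, u_n - u\rangle \to 0$, one deduces $\limsup \langle A u_n - A u,\, u_n - u\rangle \le 0$, and then
\begin{equation*}
m\|u_n - u\|^2 \le \langle A u_n - A u,\, u_n - u\rangle \longrightarrow 0
\end{equation*}
forces $u_n \to u$ in $Y$. The main obstacle is the limit passage in Step~3: with only pseudomonotonicity of $A$ one cannot evaluate $\lim \langle A u_n, v\rangle$ directly, and the varying constraint sets $E_n$ prevent testing $(P_n)$ with a fixed $z \in E$. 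The coordination of the two ingredients---Mosco convergence (to build the admissible test functions $z_k \in E_{n_k}$ approximating $z \in E$) and pseudomonotonicity (to convert the $\limsup \le 0$ bound into a $\liminf$ inequality)---is the heart of the argument.
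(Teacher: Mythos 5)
Your proposal is correct and follows essentially the same route as the paper: a priori bound via (m2) and strong monotonicity, weak subsequential limit identified in $E$ by (m1), the $\limsup\langle Au_n,u_n-u^*\rangle\le 0$ estimate feeding pseudomonotonicity to pass to the limit against Mosco-approximating test elements, uniqueness to identify the limit, and strong monotonicity to upgrade to norm convergence. No gaps.
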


\begin{proof}
It suffices to apply Lemma~\ref{Lemma1} with $E = E_n$
and $g = g_n$ for every $n \in \nat$,
to deduce that Problems~$(P_n)$
%\ref{Problem1n}
and~$(P)$
%\ref{Problem1}
have unique solutions $u_n \in E_n$ and $u \in E$, respectively.

We show that the sequence $\{ u_n \} \subset E_n$
is bounded in $Y$.
From the condition (m2) of the Mosco convergence in Definition~\ref{DefMosco},
there exists $w_n \in E_n$ such that
$w_n \to u$ in $Y$, as $n \to \infty$.
We choose $z = w_n \in E_n$ in Problem~$(P_n)$
to get
\begin{equation*}
\langle A u_n, u_n - w_n \rangle \le
\langle g_n, u_n - w_n \rangle
+\varphi (w_n) - \varphi (u_n).
\end{equation*}
Combining the latter with the $m$-strong monotonicity of $A$ implies
\begin{eqnarray}
&&
m\, \| u_n - w_n \|^2 \le
\langle A u_n - A w_n, u_n - w_n \rangle
=
\langle A u_n, u_n - w_n \rangle
- \langle A w_n, u_n - w_n \rangle \nonumber
\\[2mm]
&&\quad
\le
\langle g_n, u_n - w_n \rangle
+ \varphi (w_n) - \varphi (u_n)
- \langle A w_n, u_n - w_n \rangle \nonumber
\\ [2mm]
&&\qquad
=
\langle A w_n - g_n, w_n - u_n \rangle
+ \varphi (w_n) - \varphi (u_n).
\label{star1}
\end{eqnarray}
Since, by (\ref{Hyp1}),
the function $\varphi$ is convex, lower
semicontinuous and finite, it is continuous on $Y$, see~\cite[Theorem~5.2.8]{DMP1}, and hence
$\varphi(w_n) \to \varphi (u)$, as $n \to \infty$,
and
\begin{equation}\label{1***}
|\varphi (w_n) | \le
|\varphi(w_n) - \varphi (u)| + |\varphi(u)|
\le 1 + |\varphi(u)|
\end{equation}
for sufficiently large $n$. On the other hand, the function $\varphi$ has an affine minorant,
see~\cite[Proposition~5.2.25]{DMP1}, that is,
there exist $\l \in Y^*$ and $b \in \real$ such that
$\varphi(z) \ge \langle l, z \rangle + b$
for all $z \in Y$. Hence, we have
\begin{equation}\label{*3}
- \varphi(u_n) \le \| l\|_{Y^*} \| u_n \| + |b|. \end{equation}
Now, using (\ref{star1}), (\ref{1***}), and (\ref{*3}), we obtain
\begin{equation*}
m\, \| u_n - w_n \|^2 \le
\| Aw_n - g_n \|_{Y^*} \| u_n - w_n\| + 1 +
|\varphi(u)| + \| l \|_{Y^*} \| u_n \| +|b|.
\end{equation*}
Next, since $\{ g_n \}$, $\{ w_n \}$
and $\{ Aw_n \}$ are bounded
(recall that $A$ is a bounded operator by the pseudomonotonicity), we get
$\| A w_n - g_n \|_{Y^*}\le c_0$ and
$\| w_n \|\le c_1$ with $c_0$, $c_1 > 0$ independent of $n$.
Taking these inequalities into account,
we obtain
\begin{eqnarray*}
&&
m\, \| u_n - w_n \|^2 \le
c_0 \| u_n - w_n\| + 1 + |\varphi(u)| +
\| l \|_{Y^*} \| u_n - w_n \| +
\| l \|_{Y^*} \| w_n \| +|b| \\[2mm]
&&
\quad
\le (c_0 + \| l \|_{Y^*}) \| u_n - w_n\|
+ c_1 \| l \|_{Y^*} + 1 + |\varphi(u)| + |b|,
\end{eqnarray*}
and
\begin{equation*}
m\, \| u_n - w_n \|^2 \le
c_2 \| u_n - w_n\| + c_3,
\end{equation*}
with $c_2$, $c_3 > 0$ independent of $n$.
By the elementary inequality
\begin{equation}\label{elementary}
x^2 \le \delta_1 x + \delta_2 \ \
\Longrightarrow \ \ x^2 \le \delta_1^2 + 2 \delta_2
\end{equation}
for all $\delta_1$, $\delta_2$, $x > 0$,
we deduce that
$\{ u_n - w_n \}$ is bounded in $Y$.
Finally, $\{ u_n \}$ is bounded in $Y$ independently of $n \in \nat$.
By the reflexivity of $Y$, there are a subsequence of $\{ u_n \}$, denoted in the same way, and
$u_0 \in Y$ such that
\begin{eqnarray}\label{weakcon}
u_n \to u_0 \ \ \mbox{weakly in} \ \ Y.
\end{eqnarray}
From the condition $(m1)$ of the Mosco convergence in Definition~\ref{DefMosco}, it is clear that $u_0 \in E$.

Let $z \in E$ be arbitrary. By the condition (m2) of the Mosco convergence, there exist two sequences
$\{w_n\}$ and $\{z_n\}$ with
\begin{equation}\label{alpha1}
z_n, w_n \in E_n \ \ \mbox{such that} \ \ z_n \to z
\ \ \mbox{and} \ \ w_n \to u_0 \ \ \mbox{in} \ \ Y, \ \mbox{as} \ n\to\infty .
\end{equation}
We insert $z = w_n \in E_n$ in $(P_n)$ to get
\begin{equation*}
\langle A u_n , u_n - w_n \rangle
\le \langle  g_n, u_n - w_n \rangle
+\varphi (w_n) - \varphi (u_n).
\end{equation*}
Note that
\begin{eqnarray*}
&&\limsup_{n\to\infty} \langle A u_n , u_n - w_n \rangle=\limsup_{n\to\infty} \langle A u_n , u_n - u_0\rangle+\lim_{n\to\infty} \langle A u_n , u_0 - w_n \rangle\\[2mm]
&&=\limsup_{n\to\infty} \langle A u_n , u_n - u_0\rangle .
\end{eqnarray*}
Passing to the upper limit as $n\to \infty$ in the inequality above implies
\begin{equation*}
\limsup_{n\to\infty} \langle A u_n , u_n - u_0\rangle\le \lim_{n\to\infty}\langle  g_n, u_n - w_n \rangle
+\lim_{n\to\infty}\varphi (w_n)
- \liminf_{n\to\infty}\varphi (u_n)\le 0,
\end{equation*}
where we have used the fact that $\varphi$ is weakly semicontinuous.
Combining the last inequality with (\ref{weakcon}),
and the pseudomonotonicity of $A$,
we have
\begin{equation}\label{pscon}
\left\{
\begin{array}{lll}
Au_n \to Au_0 \ \ \mbox{weakly in} \ \ Y^*, \\ [2mm]
\langle Au_0,u_0-v\rangle\le\liminf_{n\to \infty} \langle Au_n,u_n-v\rangle\ \ \mbox{for all} \ \ v\in Y.
\end{array}
\right.
\end{equation}
Taking $z = z_n \in E_n$ in $(P_n)$, we have
\begin{equation*}
\langle A u_n , u_n - z_n \rangle
\le \langle  g_n, u_n - z_n \rangle
+\varphi (z_n) - \varphi (u_n),
\end{equation*}
which together with (\ref{alpha1}) and (\ref{pscon})
yields
\begin{eqnarray*}
&&
\langle A u_0 , u_0 - z \rangle\\[2mm]
&&
\quad
\le \liminf_{n\to \infty}\langle A u_n , u_n - z \rangle\le \limsup_{n\to\infty}\langle A u_n , u_n - z \rangle\\[2mm]
&&
\qquad
=\limsup_{n\to\infty}\langle A u_n , u_n - z \rangle+\lim_{n\to\infty}\langle A u_n , z - z_n \rangle=\limsup_{n\to\infty}\langle A u_n , u_n - z_n\rangle\\[2mm]
&&
\quad
\le \lim_{n\to\infty}\langle  g_n, u_n - z_n \rangle
+\lim_{n\to\infty}\varphi (z_n) - \liminf_{n\to\infty}\varphi (u_n)\\[2mm]
&&\qquad
\le \langle g, u_0 - z \rangle+\varphi(z)-\varphi(u_0).
\end{eqnarray*}
Because $z\in E$ is arbitrary, we deduce that
$u_0 \in E$ is a solution to Problem~$(P)$.
By the uniqueness of solution to Problem~$(P)$, we get
$$
u_0 = u.
$$
Further, the convergence of the whole sequence
follows by a contradiction argument.

Finally, we show the strong convergence of
$\{ u_n \}$ to $u$ in $Y$.
We begin with the proof of the inequality
\begin{equation}\label{beta1}
\limsup_{n\to \infty} \langle A u_n, u_n - u \rangle \le 0.
\end{equation}
From the condition $(m2)$ of the Mosco convergence
applied to $u=u_0$, we find a sequence
$\{ \eta_n \} \subset E_n$ such
that $\eta_n \to u$ in $Y$, as $n \to \infty$.
Since $u_n \in E_n$ solves Problem $(P_n)$,
we choose $z = \eta_n \in E_n$ in $(P_n)$ to
obtain
$$
\langle A u_n - g_n, \eta_n - u_n \rangle
+\varphi (\eta_n) - \varphi (u_n) \ge 0
\ \ \mbox{for all} \ \ n \in \nat,
$$
which entails
\begin{equation}\label{alpha5xx}
\langle A u_n, u_n - u \rangle
\le \langle Au_n, \eta_n - u \rangle
+ \langle g_n, u_n - \eta_n \rangle
+\varphi (\eta_n) - \varphi (u_n).
\end{equation}
We observe the following convergences
%We use the following convergences
\begin{itemize}
\item[(a)]
$\langle Au_n, \eta_n - u \rangle \to 0$,
since $\eta_n - u \to 0$ in $Y$, and $A$
is a bounded operator,
\item[(b)]
$\langle g_n, u_n - \eta_n \rangle \to 0$,
by (\ref{Hyp1n})(b), and $u_n - \eta_n \to 0$ weakly in $Y$,
\item[(c)]
$\varphi(\eta_n) \to \varphi(u)$, since
$\varphi$ is a continuous function,
\item[(d)]
$\limsup_{n\to \infty}(-\varphi(u_n)) \le \varphi(u)$,
since $\varphi$ is sequentially weakly lower
semicontinuous.	
\end{itemize}
Using (a)--(d) in (\ref{alpha5xx}), we deduce (\ref{beta1}).
By the $m$-strong monotonicity of $A$, inequality (\ref{beta1}) and the fact that $u_n \to u$
weakly in $Y$, we have
\begin{eqnarray*}
&&
m \, \limsup_{n\to \infty} \| u_n - u\|^2 \le
m\, \limsup_{n\to \infty} \langle Au_n - Au, u_n - u \rangle
\\[2mm]
&&\quad
\le
m\, \limsup_{n\to \infty} \langle Au_n, u_n - u \rangle
+
  \limsup_{n\to \infty} \langle Au, u - u_n \rangle \le 0.
\end{eqnarray*}
This proves that
$\| u_n - u\| \to 0$, as $n \to \infty$.
This completes the proof of the proposition.
\end{proof}

\section{Quasi--variational--hemivariational inequality}\label{section4}

The goal of this section is to establish an existence theorem for the elliptic quasi--variational--hemivariational inequalities and demonstrate that the solution set is a compact set in $V$.
Such class of problems represent elliptic va\-ria\-tio\-nal--hemivariational
inequalities with a constraint set depending on a solution.

Let $V$ be a reflexive Banach space and $V^*$
its dual space.
The norm in $V$ and the duality brackets for the pair
$(V^*, V)$ are denoted by $\| \cdot \|$ and
$\langle \cdot, \cdot \rangle$, respectively.
Further, let $X$ be a Hilbert space with the norm $\| \cdot\|_{X}$ and the inner product $\langle \cdot, \cdot \rangle_X$.
Given a set $C \subset V$, operators
$A \colon V \to V^*$ and $M \colon V \to X$,
functions $\varphi \colon V \to \real$ and
$j \colon X \to \real$, a multifunction
$K \colon C \to 2^C$, and $f \in V^*$, we consider the following problem.

\begin{Problem}\label{mainproblem1}
{\it Find $u \in C$ such that $u \in K(u)$ and}
\begin{equation*}
\langle A u - f, z - u \rangle
+ \varphi (z) - \varphi (u)
+ j^0(Mu; Mz - Mu) \ge 0
\ \ \mbox{\rm for all} \ \ z \in K(u).
\end{equation*}
\end{Problem}

To deliver the existence of a solution to Problem~\ref{mainproblem1}, we will need the following hypotheses on the data.

\begin{itemize}
\item[${\underline{H(A)}}$]\
$A \colon V \to V^*$ is an operator such that
\begin{itemize}
\item[(a)] $A$ is pseudomonotone,
\item[(b)] $A$ is $m$-strongly monotone.
\end{itemize}
\item[${\underline{H(C)}}$] \
$C\subset V$ is nonempty, closed and convex.
\item[${\underline{H(j)}}$]\
$j \colon X \to \real$ is a locally Lipschitz function such that
	$$
	\| \partial j(x) \|_{X} \le
	\alpha + \beta \| x \|_X \ \ \mbox{for all} \ \ x \in X
	$$
with $\alpha$, $\beta \ge 0$.
\item[${\underline{H(K)}}$] \
$K \colon C \to 2^C$ is a multifunction with nonempty, closed, convex values which is weakly Mosco continuous, i.e.,
for any $\{ v_n \} \subset V$ such that $v_n \to v$ weakly in $V$, one has $K(v_n) \ \stackrel{M} {\longrightarrow} \ K(v)$, and there exists a bounded set $Q\subset V$ such that $K(u)\cap Q\neq \emptyset$ for all $u\in C$.
\item[${\underline{H(M)}}$] \
$M \colon V \to X$ is a linear  and compact operator.
\item[${\underline{H(\varphi)}}$] \
$\varphi \colon V \to \real$ is a convex, and lower semicontinuous function.
\item[${\underline{H(f)}}$] \ $f\in V^*$.
\item[${\underline{(H_0)}}$] \
$m > \beta \, \| M \|^2_{{\mathcal L}(V, X)}$.
\end{itemize}

\begin{Theorem}\label{Theorem1}
If hypotheses
$H(A)$, $H(C)$, $H(j)$, $H(K)$, $H(M)$,
$H(\varphi)$, $H(f)$, and $(H_0)$ hold, then Problem~\ref{mainproblem1} has a solution.
\end{Theorem}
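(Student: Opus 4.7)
The plan is to apply the Kakutani--Ky Fan--Glicksberg fixed point theorem (Theorem~\ref{KKF}) in the product reflexive Banach space $V \times X$ to a set-valued map built from a \emph{linearized} variational inequality together with a Clarke-subgradient selection. For each pair $(w, \eta) \in C \times X$ I consider the auxiliary problem
$$
u \in K(w), \quad
\langle A u - f + M^* \eta, \, z - u \rangle
+ \varphi(z) - \varphi(u) \ge 0
\ \ \mbox{for all} \ \ z \in K(w).
$$
Lemma~\ref{Lemma1}, applied with $Y = V$, $E = K(w)$ and $g = f - M^*\eta$, provides a unique solution $\Gamma(w, \eta) \in K(w) \subset C$. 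I then define $S \colon C \times X \to 2^{V \times X}$ by
$$
S(w, \eta) = \{\, \Gamma(w, \eta) \,\} \times \partial j \bigl( M \Gamma(w, \eta) \bigr).
$$
A fixed point $(u^*, \eta^*)$ of $S$ has $u^* = \Gamma(u^*, \eta^*)$ and $\eta^* \in \partial j(Mu^*)$; together with $\langle \eta^*, Mz - Mu^* \rangle_X \le j^0(Mu^*; Mz - Mu^*)$ this forces $u^*$ to solve Problem~\ref{mainproblem1}, so it suffices to verify the hypotheses of Theorem~\ref{KKF}.

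Nonempty, closed and convex values of $S$ are immediate: $\Gamma(w, \eta)$ is a singleton, and $\partial j(x)$ is nonempty, closed and convex because $j$ is locally Lipschitz. To build an $S$-invariant bounded, closed and convex set $D \subset C \times X$, I plug some $z_w \in K(w) \cap Q$ (supplied by $H(K)$) into the auxiliary VI and combine the $m$-strong monotonicity, the boundedness of $A$ on $Q$, the affine minorant of $\varphi$ used in the proof of Proposition~\ref{Prop1}, and the elementary implication (\ref{elementary}) to derive an a priori bound
$$
\| \Gamma(w, \eta) \| \le \tfrac{\| M \|_{{\mathcal L}(V, X)}}{m} \, \| \eta \|_X + c_0,
$$
with $c_0 > 0$ depending only on the data. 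Hypothesis $H(j)$ then yields $\| \xi \|_X \le \tfrac{\beta \| M \|^2_{{\mathcal L}(V, X)}}{m} \, \| \eta \|_X + c_1$ for every $\xi \in \partial j(M\Gamma(w, \eta))$. Under $(H_0)$ the leading coefficient is strictly less than $1$, so the set $D = \{(w, \eta) \in C \times X \mid \|w\| \le R_1, \, \|\eta\|_X \le R_2\}$ is $S$-invariant whenever $R_2$ is large enough and $R_1 = \|M\|_{{\mathcal L}(V, X)} R_2 / m + c_0$.

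The core step is sequential closedness of the graph of $S$ in $(V \times X)_w \times (V \times X)_w$. Assume $(w_n, \eta_n) \to (w, \eta)$ weakly in $V \times X$ and $(u_n, \xi_n) \to (u, \xi)$ weakly with $u_n = \Gamma(w_n, \eta_n)$ and $\xi_n \in \partial j(Mu_n)$. The key remark is that $M$ compact implies $M^* \colon X \to V^*$ compact, so $M^*\eta_n \to M^*\eta$ \emph{strongly} in $V^*$. Combined with the weak Mosco continuity of $K$, this puts the sequence of auxiliary problems exactly into the setting of Proposition~\ref{Prop1} with $E_n = K(w_n)$ and $g_n = f - M^*\eta_n$, yielding \emph{strong} convergence $u_n \to \Gamma(w, \eta)$ in $V$. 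Consequently $u = \Gamma(w, \eta)$, $Mu_n \to Mu$ strongly in $X$, and the standard upper semicontinuity of the Clarke subgradient from the strong-to-weak topology gives $\xi \in \partial j(Mu)$, i.e.\ $(u, \xi) \in S(w, \eta)$. Theorem~\ref{KKF} then delivers a fixed point and the theorem is proved.

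I expect the main obstacle to be the $S$-invariance step: the a priori bound from the VI is linear in $\|\eta\|_X$ with coefficient $\|M\|_{{\mathcal L}(V, X)}/m$, and the growth of $\partial j$ is linear in $\|u\|$ with coefficient $\beta \|M\|_{{\mathcal L}(V, X)}$, so closing the loop produces exactly the factor $\beta \|M\|^2_{{\mathcal L}(V, X)}/m$ whose smallness is condition $(H_0)$. The only other delicate point is obtaining \emph{strong} (rather than merely weak) convergence of $u_n$, which is needed to pass to the limit inside $\partial j$; this is precisely what the compactness of $M^*$ combined with Proposition~\ref{Prop1} delivers, and it explains why the approach of the present paper succeeds without any relaxed monotonicity assumption on $\partial j$.
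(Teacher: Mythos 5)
Your proposal is correct and follows essentially the same route as the paper: you linearize the nonsmooth term, solve the parametric variational inequality via Lemma~\ref{Lemma1}, use Proposition~\ref{Prop1} (strong convergence $g_n=f-M^*\eta_n\to g$ in $V^*$ from compactness of $M^*$, plus the Mosco continuity in $H(K)$) to get weak-to-strong continuity of the solution map, and then apply the Kakutani--Ky Fan--Glicksberg theorem in $V\times X$ to the pair formed by the solution map and the Clarke subgradient. The only genuine differences are implementation details. First, the paper inserts an extra truncation step: it replaces $\partial j$ by $F(z)=\partial j(P_{R_2}(z))$ with the radial retraction $P_{R_2}$, so the second component of its map is bounded by a fixed $R$, and it recovers $F(Mu^*)=\partial j(Mu^*)$ a posteriori from the a priori estimate of its Step~2; you instead keep $\partial j$ itself and close the invariance of $D$ directly, noting that the composed radius estimate is affine with slope $\beta\|M\|^2_{{\mathcal L}(V,X)}/m<1$ under $(H_0)$, so sufficiently large radii $R_1,R_2$ work. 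Your bookkeeping is arguably cleaner, since it makes explicit the bound for the auxiliary problem with an arbitrary admissible second argument, whereas the paper's Step~5 attributes this bound somewhat loosely to its Step~2, where the estimate is derived only for solutions of Problem~\ref{mainproblem2} (i.e.\ with $w\in\partial j(Mu)$). Second, you only need the easy implication of the paper's Step~1 (a fixed point of the inclusion form solves Problem~\ref{mainproblem1}), so you can bypass the selection argument producing a $z$-independent subgradient; that argument is needed in the paper only for the full equivalence of Problems~\ref{mainproblem1} and~\ref{mainproblem2}. One implicit point, shared with the paper, deserves an explicit line in a final write-up: the constant $c_0$ uses $\sup_{z\in Q}\bigl(\|Az\|_{V^*}+|\varphi(z)|+\|z\|\bigr)<\infty$, i.e.\ boundedness of $A$ (from pseudomonotonicity) and of $\varphi$ on the bounded set $Q$.
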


\begin{proof}
The proof will be done in several steps. The main idea is to apply the  Kakutani--Ky
Fan--Glicksberg  fixed point theorem to an equivalent form of the problem.

{\bf Step 1.} First we note that Problem~\ref{mainproblem1}
can be formulated in the following equivalent form.
\begin{Problem}\label{mainproblem2}
{\it Find $u \in C$ such that $u \in K(u)$ and
there exists $w \in \partial j (Mu)$,
$w \in X$ and}
\begin{equation}\label{interineq}
\langle A u - f, z - u \rangle
+ \varphi (z) - \varphi (u)
+ \langle M^* w, z - u \rangle \ge 0
\ \ \mbox{\rm for all} \ \ z \in K(u).
\end{equation}
\end{Problem}
\noindent
Here and in what follows, $M^* \colon X^* \to V^*$ denotes the adjoint operator  to $M$.

Let $u \in C$ be a solution to Problem~\ref{mainproblem1}. Using~\cite[Proposition~3.23(iii)]{MOSbook},
we have the following
property
$$
j^0(Mu; Mz - Mu)
= \max \{ \,
\langle \zeta, Mz - Mu \rangle_{X}
\mid \zeta \in \partial j(M u)\, \}
$$
for all $z \in K(u)$. Thus, for each $z\in K(u)$, there exists
$w_z \in \partial j(Mu)$ such that
$$
j^0(Mu; Mz - Mu) = \langle w_z, Mz - Mu
\rangle_{X} = \langle M^* w_z, z-u \rangle_{X}.
$$
So, for each $z\in K(u)$ there exists $w_z \in \partial j(Mu)$ such that
\begin{equation}\label{oneineq}
\langle A u - f, z - u \rangle
+ \varphi (z) - \varphi (u)
+ \langle M^* w_z, z - u \rangle \ge 0.
\end{equation}
Arguing as in the proof of~\cite[Proposition 3.3]{khanGiannessi2000}, we are able to find an element $w \in \partial j (Mu)$, which is independent of $z\in K(u)$,
such that inequality (\ref{interineq}) holds.
Hence, we see that $u \in C$ is a solution to Problem~\ref{mainproblem2}.

Conversely, let $u \in C$ be a solution to Problem~\ref{mainproblem2}. Then there is
$w \in \partial j(Mu)$ which, by the definition of the subgradient, means that
$j^0(Mu, \eta) \ge \langle w, \eta \rangle_X$
for all $\eta \in X$. Hence,
$$
\langle M^*w, z-u \rangle_{X} =
\langle w, Mz - Mu \rangle_{X} \le
j^0(Mu; Mz - Mu)
$$
for all $z \in K(u)$. Therefore $u \in C$ solves Problem~\ref{mainproblem1}.
In conclusion, Problems~\ref{mainproblem1}
and~\ref{mainproblem2} are equivalent.
In what follows we show existence of solutions to  Problem~\ref{mainproblem2}.

{\bf Step 2.}
We obtain ``a priori" estimate for the solutions to Problem~\ref{mainproblem2}.
Let $u \in C$ be a solution to Problem~\ref{mainproblem2}.
So $u \in K(u)$ and there exists $w \in \partial j(Mu)$ such that inequality (\ref{interineq}) holds.
Since, by $H(K)$, $K(u)\cap Q$ is nonempty,
there exists $z_0 \in K(u)\cap Q$ such that
\begin{equation*}
\langle A u - f, z_0 - u \rangle
+ \varphi (z_0) - \varphi (u)
+ \langle M^* w, z_0 - u \rangle \ge 0.
\end{equation*}
Hence
\begin{equation*}
\langle A u - A z_0, u - z_0 \rangle
\le
\langle A z_0 - f + M^* w, z_0 -u \rangle
+ \varphi (z_0) - \varphi (u).
\end{equation*}
From~\cite[Proposition~5.2.25]{DMP1}, it is known that there are $\l \in Y^*$ and $b \in \real$ such that
$\varphi(z) \ge \langle l, z \rangle + b$
for all $z \in V$. Hence, we have
$- \varphi(u) \le
\| l\|_{Y^*} \| u \| + |b|$ which together with $m$-strong monotonicity of $A$ implies
$$
m\, \| u - z_0\|^2 \le \left(
\| A z_0 \|_{V^*} + \| f \|_{V^*} +
\| M \| \| w \|_{X^*}
\right)
\| u - z_0 \| + |\varphi (z_0)|
+ \| l \|_{V^*} \| u \| + |b|,
$$
where $\| M \| = \| M \|_{{\mathcal L}(V, X)}$.
We use the growth condition in $H(j)$ and the
elementary inequality
$\| u \| \le \| u - z_0 \| + \| z_0 \|$
to obtain
\begin{eqnarray*}
&&
m \, \| u - z_0\|^2 \le
\left(
\| A z_0 \|_{V^*} + \| f \|_{V^*} +
\alpha \| M \| + \| l \|_{X^*}
\right) \| u - z_0\| \\ [2mm]
&&\quad
+ \beta \| M \|^2 (\| u - z_0\| + \| z_0 \|)
\| u - z_0\|
+|\varphi (z_0)|
+ \| l \|_{V^*} \| z_0 \| + |b|,
\end{eqnarray*}
which implies
\begin{equation*}
\left( m - \beta \| M \|^2 \right)
\, \| u - z_0\|^2 \le
c_1 \, \| u - z_0 \| + c_2,
\end{equation*}
where
\begin{eqnarray*}
&&
c_1 = \| A z_0 \|_{V^*} + \| f \|_{V^*} +
\alpha \| M \| + \| l \|_{X^*}
+ \beta \| M \|^2 \| z_0 \|,
\\ [2mm]
&&\quad
c_2 = |\varphi (z_0)|
+ \| l \|_{V^*} \| z_0 \| + |b|.
\end{eqnarray*}
By the smallness condition $(H_0)$ and the elementary inequality (\ref{elementary}),
we deduce that $\| u - z_0\|$ is bounded.
So there are constants $R_1$, $R_2 > 0$
such that
$$
\| u \|\le R_1 \ \ \ \mbox{and} \ \ \
\| M u \|_X \le \| M \| \|u \| \le \| M \| R_1 =: R_2,
$$
where $R_1$ and $R_2$ rely on the set $Q$ but are independent of $z_0$, since $z_0\in Q$ and $Q$ is bounded in $V$.
This completes the proof of the desired estimates.

{\bf Step 3.}
We now define a modification of the multivalued subgradient term $\partial j$.
We introduce $F\colon X \to 2^X$ given by
$$
F(z) = \partial j (P_{R_2}(z))
\ \ \mbox{for all} \ \ z \in X,
$$
where $P_{R_2} \colon X \to X$ is the $R_2$-radial retraction defined by
$$
P_{R_2}(z) =
\displaystyle
\begin{cases}
z, &\text{if $\| z \|_X \le R_2$,} \\
\displaystyle
R_2\frac{z}{\| z\|_X}, &\text{otherwise,}
\end{cases}
\ \ \ \ \mbox{for} \ \ z \in X.
$$
Hence, we have
$$
F(z) =
\displaystyle
\begin{cases}
\partial j(z), &\text{if $\| z \|_X \le R_2$,} \\
\displaystyle
\partial j \Big(R_2\frac{z}{\| z\|_X}\Big), &\text{otherwise,}
\end{cases}
\ \ \ \ \mbox{for} \ \ z \in X.
$$
Note that $F$ has the same properties as $\partial j$ (nonempty, closed, weakly compact values, the graph of
$F$ is closed in $X \times X_w$-topology) and,
moreover, by $H(j)$, it follows
$$
\| F(z) \|_X \le \alpha + \beta R_2 =: R
\ \ \mbox{for all} \ \ z \in X.
$$

{\bf Step 4.}
Let
$D := \{ (v, w) \in C \times X \mid
\| v \| \le R_1, \ \| w\|_X \le R \}$.
For each $(v, w) \in D$ fixed, we consider
the following auxiliary problem:
\begin{equation*}
\left\{
\begin{array}{lll}
\mbox{find} \ u \in C \ \mbox{such that}
\ u \in K(v) \ \mbox{and} \\ [2mm]
\quad \langle A u - f, z - u \rangle
+\varphi (z) - \varphi (u)
+ \langle M^* w, z - u \rangle_X \ge 0 \\ [2mm]
\qquad \ \ \mbox{\rm for all} \ \ z \in K(v).
\end{array}
\right.
\leqno{P(v,w)}
\end{equation*}
Let $p \colon D \subset C \times X \to C$
be the solution map of problem $P(v, w)$ defined by
\begin{equation}\label{solutionmap}
p(v, w) = u.
\end{equation}
The map $p$ is well defined since,
for each $(v, w) \in D$,
problem $P(v, w)$ has a unique solution $u \in C$. Indeed, problem $P(v, w)$ can be equivalently written as
\begin{equation*}
\left\{
\begin{array}{lll}
\mbox{find} \ u \in C \ \mbox{such that} \ u \in E
\ \ \mbox{and} \ \
\\ [2mm]
\quad \langle A u - g, z - u \rangle
+ \varphi (z) - \varphi (u) \ge 0
\ \ \mbox{\rm for all} \ \ z \in E
\end{array}
\right.
%%%\leqno{P(v,w)}
\end{equation*}
with $g = f - M^* w$ and $E = K(v)$.
From $H(A)$, $H(K)$, $H(M)$, $H(\varphi)$, and $H(f)$,
the latter has a unique solution, see Lemma~\ref{Lemma1}.

Furthermore, we claim that
$p \colon D \subset C \times X \to C$ is continuous
from $V_w \times X_w$ to $V$.
To this end, let $(v_n, w_n) \in C \times X$,
$v_n \to v$ weakly in $V$, and
$w_n \to w$ weakly in $X$ as $n \to \infty$.
We show that
$u_n = p(v_n, w_n) \to p(v, w) = u$ in $V$.
Since, by the definition of $p$, $u_n$ is the unique solution to problem $P(v_n, w_n)$, we have
\begin{equation*}
\left\{
\begin{array}{lll}
\mbox{find} \ u_n \in C \ \mbox{such that}
\ u_n \in K(v_n) \ \mbox{and} \\ [2mm]
\quad \langle A u_n - f, z - u_n \rangle
+\varphi (z) - \varphi (u_n)
+ \langle M^* w_n, z - u_n \rangle_X \ge 0 \\ [2mm]
\qquad \ \ \mbox{\rm for all} \ \ z \in K(v_n).
\end{array}
\right.
\leqno{P(v_n, w_n)}
\end{equation*}
Again, we remark that problem $P(v_n, w_n)$ can be formulated as follows
\begin{equation*}
\left\{
\begin{array}{lll}
\mbox{find} \ u_n \in C \ \ \mbox{such that} \ \,
u_n \in E_n \ \ \mbox{and}
\\ [2mm]
\quad \langle A u_n - g_n, z - u_n \rangle
+ \varphi (z) - \varphi (u_n) \ge 0
\ \ \mbox{\rm for all} \ \ z \in E_n
\end{array}
\right.
%%%\leqno{P(v,w)}
\end{equation*}
with $g_n = f - M^* w_n \in V^*$ and
$E_n = K(v_n) \subset V$.
Using the convergences
$v_n \to v$ weakly in $V$ and
$w_n \to w$ weakly in $X$, by the compactness
of $M^*$ and $H(K)$, we get
\begin{eqnarray*}
&&
g_n \to g \ \ \mbox{in} \ \ V^*, \\[2mm]
&&
E_n = K(v_n) \ \stackrel{M} {\longrightarrow}
\ K(v) = E.
\end{eqnarray*}
By applying Proposition~\ref{Prop1}, we obtain
that $u_n \to u$ in $V$, where $u \in K(v)$
is the unique solution to problem $P(v, w)$. This proves the claim.

{\bf Step 5.}
We define the multifunction
$\Lambda \colon D \to 2^D$ by
\begin{equation}\label{alpha5}
\Lambda (v, w) =
\big(
p(v, w), F(M p(v, w))
\big)
\ \ \mbox{for} \ \ (v, w) \in D.
\end{equation}
The values of the map $\Lambda$ stay in $D$.
In fact, if
$(v, w) \in D$, then $\| v \| \le R_1$
and $\| w \|_X \le R$. Then
$\| p (v, w) \| = \| u \|\le R_1$ by Step~2,
and
$\| F(M p(v, w))\|_X  \le \alpha + \beta \| M \| \|u \| \le R$ by Step~3.
This means that
$(p(v, w), F(M p(v, w))) \in D$. Further, the values of $\Lambda$ are nonempty, closed and convex sets, by the analogous properties
of the Clarke subgradient.

The next claim is that the graph of $\Lambda$
is sequentially weakly closed in $D \times D$.
Let $(v_n, w_n) \in D$,
$({\bar{v}}_n, {\bar{w}}_n)
\in \Lambda (v_n, w_n)$,
$(v_n, w_n) \to (v, w)$ weakly in $V \times X$,
and
$({\bar{v}}_n, {\bar{w}}_n) \to ({\bar{v}}, {\bar{w}})$ weakly in $V \times X$.
We show that
$({\bar{v}}, {\bar{w}}) \in \Lambda(v, w)$.
By the definition of $\Lambda$, we have
\begin{equation}\label{SSS}
{\bar{v}}_n = p(v_n, w_n) \ \ \ \mbox{and} \ \ \
{\bar{w}}_n \in F(M p(v_n, w_n)).
\end{equation}
From the continuity of the map $p$ (see Proposition~\ref{Prop1}) and
continuity of $M$, we get
$$
p(v_n, w_n) \to p(v, w) \ \ \mbox{in} \ V
\ \ \ \mbox{and} \ \ \
M p(v_n, w_n) \to M p(v, w) \ \ \mbox{in} \ X
$$
which, together with (\ref{SSS}), entails
\begin{equation*}
{\bar{v}} = p(v, w) \ \ \ \mbox{and} \ \ \
{\bar{w}} \in F(M p(v, w)).
\end{equation*}
The latter is a consequence of the closedness of the graph of $F$ in $X \times X_w$ topology.
Hence
$
({\bar{v}}, {\bar{w}}) \in
\big(
p(v, w), F(M p(v, w))
\big) = \Lambda (v, w)$. The claim is proved.

{\bf Step 6.}
We apply the  Kakutani--Ky
Fan--Glicksberg  theorem with $Y = V \times X$ and the map $\Lambda$ given by (\ref{alpha5}).
Thus, there exists $(v^*, w^*) \in D$
such that $(v^*, w^*) \in \Lambda (v^*, w^*)$.
Hence, $v^* = u^*$ and
$w^* \in F(M u^*)$, where
$u^* \in C$ satisfies $u^* \in K(u^*)$ and
$$
\langle A u^* - f, z - u^* \rangle
+ \varphi (z) - \varphi (u^*)
+ \langle M^* w^*, z - u^* \rangle_X
\ge 0 \ \ \mbox{\rm for all} \ \ z \in K(u^*)
$$
with $w^* \in F(M u^*)$.
Repeating the estimate of Step~2 for
$u^* \in C$, we have
$\| u^*\| \le R_1$ and $\| M u^* \|_X \le R_2$
which implies
$F(M u^*) = \partial j (M u^*)$. Hence,
$u^* \in C$ is such that $u^* \in K(u^*)$ and
$$
\langle A u^* - f, z - u^* \rangle
+ \varphi (z) - \varphi (u^*)
+ \langle M^* w^*, z - u^* \rangle_X
\ge 0 \ \ \mbox{\rm for all} \ \ z \in K(u^*)
$$
with $w^* \in \partial j(M u^*)$.
Thus $u^* \in C$ solves Problem~\ref{mainproblem2}. By Step~1, we conclude that
$u^* \in C$ is a solution to Problem~\ref{mainproblem1}.
The proof is complete.
\end{proof}

The following result concerns the compactness of the solution set.
\begin{Theorem}\label{Theorem2}
If the hypotheses of Theorem~\ref{Theorem1} hold, then the solution set to Problem~\ref{mainproblem1} is a nonempty and compact subset of $V$.
\end{Theorem}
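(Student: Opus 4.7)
The existence part is immediate from Theorem~\ref{Theorem1}, so only compactness remains. I would prove it by showing that every sequence $\{u_n\}$ in the solution set $S$ admits a subsequence converging strongly in $V$ to a point of $S$. The strategy is to reuse, essentially verbatim, the machinery built in the proof of Theorem~\ref{Theorem1}: the a priori bound of Step~2 and the weak-to-strong continuity of the auxiliary selection map $p$ of Step~4.

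\textbf{Execution.} First I would note that the a priori estimate of Step~2 is uniform over all solutions, since the constants $R_1$ and $R_2$ there depend only on $Q$ and the data. Hence every $u \in S$ satisfies $\|u\| \le R_1$ and $\|Mu\|_X \le R_2$. Fix now $\{u_n\}\subset S$. By reflexivity of $V$ I extract a subsequence with $u_n \to u$ weakly in $V$, and compactness of $M$ yields $Mu_n \to Mu$ in $X$. By the equivalence with Problem~\ref{mainproblem2} established in Step~1 of Theorem~\ref{Theorem1}, for each $n$ there exists $w_n \in \partial j(Mu_n)$ with
\begin{equation*}
\langle A u_n - f, z - u_n\rangle + \varphi(z) - \varphi(u_n) + \langle M^*w_n, z - u_n\rangle \ge 0 \ \mbox{ for all } z \in K(u_n),
\end{equation*}
and by $H(j)$ these satisfy $\|w_n\|_X \le \alpha + \beta R_2 =: R$. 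Passing to a further subsequence, $w_n \to w$ weakly in $X$, and the closedness of the graph of $\partial j$ in the $X \times X_w$-topology gives $w \in \partial j(Mu)$.

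\textbf{Conclusion and main difficulty.} At this stage $(u_n, w_n) \in D$ and $u_n = p(u_n, w_n)$, by definition of $p$ and the uniqueness of the solution to problem $P(u_n, w_n)$ guaranteed by Lemma~\ref{Lemma1}. The weak-to-strong continuity of $p$ from $V_w \times X_w$ into $V$, proved in Step~4 via Proposition~\ref{Prop1}, then yields $u_n = p(u_n, w_n) \to p(u,w)$ strongly in $V$, which combined with $u_n \to u$ weakly forces $u = p(u,w)$. Thus $u_n \to u$ in $V$, and $u$ paired with $w \in \partial j(Mu)$ solves Problem~\ref{mainproblem2}, so $u \in S$ by the Step~1 equivalence. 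The only delicate point is the verification that $(u_n, w_n)$ lies in the domain $D$ on which $p$ and its weak-to-strong continuity are defined; this is precisely why the uniformity of the a priori estimate of Step~2 (independence of $R_1$, $R_2$ of the particular solution) is essential, and why the growth condition in $H(j)$ is used to uniformly bound $w_n$ in $X$.
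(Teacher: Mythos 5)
Your argument is correct, but it is organized differently from the paper's. The paper proves compactness in two separate claims: first that $p(D)$ is compact in $V$ (weak sequential compactness of the bounded closed convex set $D$ plus the weak-to-strong continuity of $p$), and second that the solution set ${\mathbb S}$ is closed in $V$, the latter by passing to the limit \emph{directly} in the inequality of Problem~\ref{mainproblem1} along a strongly convergent sequence of solutions, using the Mosco convergence $K(u_n)\stackrel{M}{\longrightarrow}K(\bar u)$, the upper semicontinuity of $j^0(\cdot;\cdot)$, the continuity of $\varphi$ and the boundedness of $A$; compactness then follows since ${\mathbb S}\subset p(D)$. You instead run a single subsequence argument: you represent each solution through the Step~1 equivalence as $u_n=p(u_n,w_n)$ with a selection $w_n\in\partial j(Mu_n)$ bounded via $H(j)$ and the uniform Step~2 estimate, extract weak limits $(u,w)$, use compactness of $M$ and the closedness of the graph of $\partial j$ in $X\times X_w$ (the same property the paper invokes for $F$ in Step~3) to get $w\in\partial j(Mu)$, and then let the weak-to-strong continuity of $p$ deliver both the strong convergence $u_n\to u=p(u,w)$ and the membership $u\in{\mathbb S}$ in one stroke. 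Both routes are sound; yours reuses only the machinery of Theorem~\ref{Theorem1} (so the Mosco-limit passage is hidden inside Proposition~\ref{Prop1} rather than redone), while the paper's version isolates a closedness statement for ${\mathbb S}$ under strong convergence that is of independent interest and does not require re-introducing the measurable-selection/fixed-point representation of solutions. Minor points you leave implicit but which are harmless: $u\in C$ (automatic since $u=p(u,w)\in C$, or since $C$ is weakly closed), $(u,w)\in D$ by weak lower semicontinuity of the norms, and the fact that sequential compactness suffices since $V$ is a normed space.
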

\begin{proof}
Let ${\mathbb S}$ be the solution set to Problem~\ref{mainproblem1}. It is a nonempty set by Theorem~\ref{Theorem1}.
From the proof of Theorem~\ref{Theorem1}, it follows that  ${\mathbb S} \subset p(D)$, where the solution map
$p \colon D \subset C \times X \to C$
is defined by (\ref{solutionmap}) with
$$
D := \{ (v, w) \in C \times X \mid
\| v \| \le R_1, \ \| w\|_X \le R \}.
$$
We establish compactness of ${\mathbb S}$ based on two claims below.

{\bf Claim 1.} \
The set $p(D)$ is compact in $C$.
It suffices to show that $p(D)$ is sequentially compact, that is, any sequence extracted from $p(D)$
contains a subsequence that converges to an element
in this set.
Let $\{ u_n \} \subset p(D)$. From the definition of $p$, we find a sequence $\{ (v_n, w_n) \} \subset D$ such that
$u_n = p(v_n, w_n)$ for all $n \in \nat$.

Note that $D$ is bounded, closed and convex in the reflexive Banach space $V \times X$, hence it is sequentially weakly compact. So we may assume, by passing to a subsequence if necessary, that
$(v_n, w_n) \to (v, w)$ weakly in $V \times X$
with $(v, w) \in D$.
Since $p$ is sequentially continuous from $V_w \times X_w$ to $C$, we have
$$
p(v_n, w_n) \to p(v, w) \ \ \mbox{in} \ \ V.
$$
This means that $u_n := p(v_n, w_n)$
and $u_0 := p(v, w)$ satisfy $u_n \to u_0$ in $V$,
as $n \to \infty$, and $u_0 \in p(D)$. Hence, the claim follows.

{\bf Claim 2.} \
The set ${\mathbb S}$ is sequentially closed in $V$.
Let $\{ u_n  \} \subset {\mathbb S}$ be such that
$u_n \to {\bar{u}}$ in $V$, as $n \to \infty$ with
${\bar{u}} \in V$. We will show that
${\bar{u}} \in {\mathbb S}$.
It is clear that ${\bar{u}} \in C$.
We have
$u_n \in C$, $u_n \in K(u_n)$ and
\begin{eqnarray}\label{111}
&&
\langle A u_n - f, z - u_n \rangle
+ \varphi (z) - \varphi (u_n)
\nonumber \\ [2mm]
&&
\quad
+ j^0(Mu_n; Mz - Mu_n) \ge 0
\ \ \mbox{\rm for all} \ \ z \in K(u_n).
\end{eqnarray}
Let $w \in K({\bar{u}})$ be arbitrary.
Since $u_n \in C$, by the condition $(m2)$
of the Mosco convergence
$K(u_n) \, \stackrel{M} {\longrightarrow} \, K({\bar{u}})$, there exists $w_n \in C$ such that
$w_n \in K(u_n)$ and $w_n \to w$ in $V$,
as $n \to \infty$.
We choose $z = w_n \in K(u_n)$ in (\ref{111})
to get
\begin{equation}\label{222}
\langle A u_n - f, w_n - u_n \rangle
+ \varphi (w_n) - \varphi (u_n)
+ j^0(Mu_n; Mw_n - Mu_n) \ge 0.
\end{equation}
Next, we will pass to the limit in (\ref{222}).
The following convergences hold:
\begin{itemize}
	\item[(i)]
	$Au_n \to A{\bar{u}}$ weakly in $V^*$, it is a consequence of~\cite[Proposition~3.66]{MOSbook}
	and the facts that $A$ is a bounded operator and
	$\langle Au_n, u_n - {\bar{u}} \rangle = 0$,
	\item[(ii)]
	$M u_n \to M {\bar{u}}$ in $X$, by hypothesis $H(M)$,
	\item[(iii)]
	$\varphi(w_n) - \varphi(u_n) \to \varphi(w) - \varphi({\bar{u}})$, since
	$\varphi$ is a continuous function,
	\item[(iv)]
	$\limsup_{n\to \infty}j^0(Mu_n; Mw_n - Mu_n) \le
	j^0(M {\bar{u}}; Mw - M{\bar{u}})$, since $j^0(\cdot; \cdot)$ is upper semicontinuous, see~\cite[Proposition~3.23(ii)]{MOSbook}.
\end{itemize}
We employ the convergences (i)--(iv) and take the upper limit in the  inequality (\ref{222}) to obtain
\begin{equation*}
\langle A {\bar{u}} - f, w - {\bar{u}} \rangle
+ \varphi (w) - \varphi ({\bar{u}})
+ j^0(M {\bar{u}}; Mw - M{\bar{u}}) \ge 0
\end{equation*}
for all $w \in K({\bar{u}})$.
Finally, since $u_n \in K(u_n)$,
$u_n \to {\bar{u}}$ in $V$, by the condition $(m1)$,
we deduce ${\bar{u}} \in K({\bar{u}})$.
Hence ${\bar{u}} \in {\mathbb S}$ and the claim is proved.

Since a closed subset of a compact set is compact,
by Claims~1 and~2, we conclude that
the solution set ${\mathbb S}$ of  Problem~\ref{mainproblem1} is nonempty and compact subset of $V$.
The proof is complete.
\end{proof}

We complete this section by providing a simple example of function $j$ which satisfies $H(j)$ and whose subgradient is not relaxed monotone.
This means that Theorems~\ref{Theorem1} and~\ref{Theorem2}
are applicable in this case while the results of~\cite{MKZ2020}
can not be used.
\begin{Remark}\label{RRCC}
We recall that a locally Lipschitz function $j \colon X \to \real$ satisfies the relaxed monotonicity condition, if
\begin{equation}\label{EXX1}
\exists \, m \ge 0 \ \ \mbox{such that} \ \
\langle \partial j(x_1) - \partial j(x_2), x_1 - x_2 \rangle_X
\ge -m \, \| x_1 - x_2\|^2_X
\end{equation}
for all $x_1$, $x_2 \in X$, see~\cite[Section~3.3]{MOSbook}.
For a convex function $j$, condition $(\ref{EXX1})$
re\-du\-ces to the monotonicity of the (convex) subdifferential,
i.e., $m = 0$.
Now, let $j \colon \real \to \real$ be defined by
\begin{equation*}
j(r) =
\begin{cases}
0 &\text{\rm if \ $r < 0$,} \\
\frac{r^2}{2} &\text{\rm if \ $r \in [0, 1),$} \\
1 &\text{\rm if \ $r \ge 1$,}
\end{cases}
	\ \ \mbox{for} \ \ r\in \real.
\end{equation*}
It is clear that $j$ is a locally Lipschitz,
nonconvex function with
\begin{equation*}
	\partial j(r) =
	\begin{cases}
	0 &\text{\rm if \ $r < 0$,} \\
	r &\text{\rm if \ $r \in [0, 1),$} \\
	[0, 1] &\text{\rm if \ $r  = 1,$} \\
	0 &\text{\rm if \ $r > 1$}
	\end{cases}
	\ \ \mbox{for} \ \ r\in \real,
\end{equation*}
and $| \partial j(r)| \le 1$ for all $r \in \real$, so $H(j)$ holds with $\alpha =1$ and $\beta = 0$.
Note that $j$ satisfies (\ref{EXX1}) if and only if
$\partial \phi$ is a monotone graph, i.e.,
\begin{equation}\label{EXX2}
\exists \, m \ge 0 \ \ \mbox{such that} \ \
(\partial \phi(r) - \partial \phi(s)) (r-s) \ge 0
\ \ \mbox{for all} \ \ r, s \in \real,
\end{equation}
where $\phi(r) = j(r) + \frac{m}{2} r^2$ for $r \in \real$.
We claim that $j$ does not satisfy (\ref{EXX2}).
Suppose, contrary to our claim, that (\ref{EXX2}) holds.
Let
$$
r_n = 1 - \frac{1}{n}, \ \
s_n = 1 + \frac{1}{n} \ \ \mbox{for} \ \ n \in \nat.
$$
Then, there exists $m \ge 0$ such that for all $n \in \nat$,
we have
$(\partial \phi(r_n) - \partial \phi(s_n)) (r_n-s_n) \ge 0$.
Hence, a short computation leads to
$n \le m+1$, a contradiction.
Note also that in this example, the smallness condition $(H_0)$
is trivially satisfied.
\end{Remark}

\section{Applications to semipermeability models}\label{section5}

In this section we study two applications of elliptic quasi--variational--hemivariatio\-nal inequalities for the semipermeable media.
The general inequalities of this kind incorporate both the
interior and boundary semipermeability.
In both applications, we simultaneously treat monotone
and nonmonotone relations described by subdifferential operators.
In the first model the interior semipermeability law is governed by the subgradient of a locally Lipschitz potential while the heat flux on a part of the boundary is formulated as a monotone relation of the temperature.
In the second model the boundary semipermeability condition is nonmonotone and involves unilateral constraints while the interior semipermeability is described by a subdifferential of a convex function.
%In this section we deal with an application of our theoretical %results to some classes of nonconvex semipermeability problems. We %consider the interior semipermeability relations, however, the %boundary semipermeability problems can be trated as well.
%We consider a semipermeability problem involving simultaneously both %monotone and nonmonotone relations.
The weak formulations of both models turn out to be  quasi--variational--hemivariational inequalities of elliptic type.

%We give a classical formulation, weak formulation %and check the hypotheses of the existence theorem.

Let $\Omega$ be a bounded domain of $\real^d$
(that is, $\Omega$ is an open, bounded and connected set)
with Lipschitz continuous boundary $\partial \Omega = \Gamma$.
Let $\nu$ denote the unit outward normal vector on $\Gamma$ which exists a.e. on $\Gamma$.
We suppose the boundary consists of measurable and disjoint parts $\Gamma_{1}$ and $\Gamma_{2}$ such that $m(\Gamma_{1}) > 0$ and
$\Gamma = {\bar{\Gamma}}_1 \cup {\bar{\Gamma}}_2$.

The classical model for the heat conduction problem is described by the following boundary value problem.

\begin{Problem}\label{problemsemi}
	Find a temperature $u \colon \Omega \to \mathbb{R}$ such that $u \in U(u)$ and
	\begin{eqnarray}
	-{\rm div}\, a(x, \nabla u(x)) = g(x,u(x))
	&&\ \ \mbox{\rm in} \ \ \Omega
	\label{semi1}\\ [2mm]
	\left\{\begin{array}{lll}
    g(x,u(x)) = g_1(x) + g_2(x,u(x)), \\[2mm]
    -g_2(x,u(x)) \in \partial h(x, u(x))
    \end{array}\right.
	&&\ \ \mbox{\rm in} \ \ \Omega
	\label{semi2}\\ [2mm]
		u(x) = 0  &&\ \ \mbox{\rm on} \ \ \Gamma_{1} \label{semi4} \\[2mm]
	\displaystyle-\frac{\partial u(x)}{\partial \nu_a}
	\in \partial_c k (x, u(x)) &&\ \ \mbox{\rm on} \ \ \Gamma_{2}, \label{semi5}
%\\[1mm]
	%u \in U && ~~
	%\label{semi3}
	\end{eqnarray}
where $U\colon V\to 2^V$ is defined by
\begin{equation}\label{defU}
U(u):=\{\, w \in V \mid r(w)\le m(u) \, \}.
\end{equation}
\end{Problem}
Here, the conormal derivative associated with the field $a$
is given by
$$
\frac{\partial u(x)}{\partial \nu_a} =
a(x, \nabla u(x)) \cdot \nu .
$$

We will provide the variational formulation of
Problem~\ref{problemsemi} within the framework of Section~\ref{section4}.
We introduce the following space
\begin{equation}\label{spaceV}
V = \{ \, v \in H^1(\Omega)
\mid v = 0 \ \mbox{on} \ \Gamma_{1}\, \}.
%%\quad X = L^2(\Omega).
\end{equation}
\noindent
Since $m(\Gamma_{1}) > 0$, on $V$ we can consider the norm
$\| v \|_V = \| \nabla v\|_{L^2(\Omega; \real^d)}$
for $v \in V$ which is equivalent on $V$ to the standard $H^1(\Omega)$ norm due to Korn's inequality. By $\gamma \colon V \to L^2(\Gamma)$ we denote the trace operator which is known to be linear, bounded and compact. Moreover, by $\gamma v$ we denote the trace of an element $v \in H^1(\Omega)$. In the sequel, we denote by $i\colon V\to L^2(\Omega)$
the embedding operator from $V$ to $L^2(\Omega)$
and $\| i \| = \| i \|_{{\cal L}(V,L^2(\Omega))}$ stands for its norm.
We also set $C = V$ and $X = L^2(\Omega)$.

In order to study the variational formulation of Problem~\ref{problemsemi}, we need the following hypotheses.
\begin{eqnarray}
\left\{
\begin{array}{ll}
a \colon \Omega \times \real^d \to \real^d
\ \mbox{is such that}
\\ [2mm]
\ \ {\rm (a)} \ a(\cdot, \xi) \ \mbox{is measurable on}
\ \Omega \ \mbox{for all} \ \xi \in \real^d, \\ [1mm]
\qquad \mbox{and} \ a(x, 0) = 0 \ \mbox{for a.e.}
\ x \in \Omega.
%%\qquad \mbox{and} \
%%a_i(\bx, \cdot)\in C^1 (\real^d) \ \mbox{for all} \
\\ [2mm]
\ \ {\rm (b)} \
a(x, \cdot) \ \mbox{is continuous on} \ \real^d \ \mbox{for a.e.} \ x \in \Omega.
\\ [2mm]
\ \ {\rm (c)} \ \| a(x, \xi) \| \le
m_a \, (1 + \| \xi \|)
\ \ \mbox{for all} \ \ \xi \in \real^d, \ \mbox{a.e.} \ x \in \Omega \\ [1mm]
\qquad \mbox{with} \ m_a > 0. \\ [2mm]
\ \ {\rm (d)} \
(a(x, \xi_1) - a(x, \xi_2)) \cdot (\xi_1 - \xi_2)
\ge \alpha_a \, \| \xi_1 - \xi_2 \|^2 \\ [1mm]
\qquad \mbox{for all} \ \ \xi_1, \xi_2 \in \real^d,
\ \mbox{a.e.} \ x \in \Omega \ \ \mbox{with} \ \alpha_a > 0.
\end{array}
\right.
\label{functiona}
\end{eqnarray}
\begin{eqnarray}
\left
\{\begin{array}{ll}
h \colon \Omega \times \real \to \real \ \mbox{is such that}
\\ [2mm]
\ \ {\rm (a)} \
h(\cdot,r) \ \mbox{is measurable on} \ \Omega
\ \mbox{for all} \ r \in \mathbb{R}
\ \mbox{and there}\ \ ~ \  \\ [2mm]
\ \ \qquad \mbox{exists} \
\overline{e}\in L^{2}(\Omega) \ \mbox{such that} \
h(\cdot,\overline{e}(\cdot))\in L^{1}(\Omega). \\ [2mm]
\ \ {\rm (b)} \
h(x,\cdot) \ \mbox{is locally Lipschitz on} \ \mathbb{R},
\ \mbox{ for a.e.} \ x \in \Omega. \\ [2mm]
\ \ {\rm (c)} \
\mbox{there exist} \ \overline{c}_{0},\, \overline{c}_{1}\geq 0
\ \mbox{such that} \\ [2mm]
\ \ \qquad |\partial h(x,r)|\le
\overline{c}_{0} + \overline{c}_{1}| r | \ \mbox{for all}
\  r \in \mathbb{R}, \ \mbox{a.e.} \
x \in \Omega.
\end{array}
\right.
\label{functionj}
\end{eqnarray}	
\begin{eqnarray}
\left\{
\begin{array}{ll}
k \colon \Gamma_2 \times \real \to \real
\ \mbox{is such that}
\\ [2mm]
\ \ {\rm (a)} \
k(\cdot,r) \ \mbox{is measurable on} \ \Gamma_2
\ \mbox{for all} \ r \in \mathbb{R}. \qquad\qquad \ \ \\ [2mm]
\ \ {\rm (b)} \
k(x,\cdot) \ \mbox{is convex and continuous on} \ \real,
\ \mbox{a.e.} \ x \in \Omega. \\ [2mm]
\ \ {\rm (c)} \
x\mapsto k(x,v(x)) \ \, \mbox{belongs to} \ \, L^1(\Gamma_2)
\ \, \mbox{for} \ \, v\in L^2(\Gamma_2). \ ~\
\end{array}
\right.
\label{functiong}
\end{eqnarray}
\begin{equation}\label{HYPOTU}
\left\{\begin{array}{ll}
r\colon V\to \real\mbox{ is positively homogeneous, subadditive and} \ ~~ \ \quad ~~ \ \quad
\\[2mm]
\ \ \quad \mbox{lower semicontinuous},\\[2mm]
m\colon L^2(\Omega)\to \real\mbox{ is continuous such that}
\\[2mm]
\ \ \quad \rho:=\inf_{v\in L^2(\Omega)}m(v)>0
\ \mbox{and} \ r(0)\le \rho,
\\[2mm]
g_1 \in L^2(\Omega).
\end{array}\right.
\end{equation}

Using the standard procedure, we obtain the weak formulation of Problem~\ref{problemsemi} which is a quasi--variational--hemivariational inequality.
\begin{Problem}\label{problemsemi2}
Find $u \in V$ such that $u \in U(u)$ and
\begin{eqnarray*}
&&
\int_{\Omega} a(x, \nabla u(x)) \cdot \nabla (z(x)-u(x)) \, dx +
\int_{\Gamma_{2}} \big(
k(x, z(x)) - k (x, u(x)) \big) \, d\Gamma \\ [1mm]
&&
\quad +\int_{\Omega} h^0(x, u(x); z(x) - u(x)) \, dx \ge
\int_{\Omega} g_1(x) (z(x) - u(x))\, dx
\ \ \mbox{\rm for all} \ \ z \in U(u).
\end{eqnarray*}
\end{Problem}

\begin{Theorem}\label{existencesemi4}
	Assume that conditions (\ref{functiona})--(\ref{HYPOTU}) are satisfied and the following smallness condition holds
	\begin{equation}\label{semi_smallness}
	\overline c_1\sqrt{2}\, \| i \|^2 < \alpha_{a}.
	\end{equation}
	Then, Problem~\ref{problemsemi2} has a nonempty and compact solution set in $V$.
\end{Theorem}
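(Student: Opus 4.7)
The plan is to recognize Problem~\ref{problemsemi2} as an instance of the abstract Problem~\ref{mainproblem1} treated in Section~\ref{section4}, identify the data, verify the hypotheses $H(A)$, $H(C)$, $H(j)$, $H(K)$, $H(M)$, $H(\varphi)$, $H(f)$, and $(H_0)$, and then invoke Theorems~\ref{Theorem1} and~\ref{Theorem2}. Set $C = V$ with $V$ as in (\ref{spaceV}) (so $H(C)$ is trivial); let $X = L^2(\Omega)$; let $M = i$ be the embedding $V \hookrightarrow L^2(\Omega)$, which is linear and, by Rellich--Kondrachov, compact, hence $H(M)$ holds with $\|M\|=\|i\|$. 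Define the operator $A \colon V\to V^*$ by $\langle Au, v\rangle = \int_\Omega a(x,\nabla u)\cdot \nabla v\, dx$, the potential $\varphi(v) = \int_{\Gamma_2} k(x,\gamma v)\, d\Gamma$ on $V$, the locally Lipschitz functional $j(v) = \int_\Omega h(x,v(x))\, dx$ on $X$, the element $f\in V^*$ by $\langle f,v\rangle = \int_\Omega g_1 v\, dx$, and $K = U$ with $U$ given by (\ref{defU}).

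The verifications of $H(A)$, $H(\varphi)$, $H(j)$, $H(f)$ are then standard and proceed as follows. From (\ref{functiona})(a)--(c), $A$ is bounded and demicontinuous; (\ref{functiona})(d) gives $\alpha_a$-strong monotonicity, hence also pseudomonotonicity, so $H(A)$ holds with $m = \alpha_a$. Hypothesis (\ref{functiong}) and the compactness of $\gamma$ yield that $\varphi$ is well defined, convex (pointwise convexity of $k(x,\cdot)$), and weakly sequentially lower semicontinuous, giving $H(\varphi)$. By the Aubin--Clarke theorem applied under (\ref{functionj}), $j$ is locally Lipschitz on $L^2(\Omega)$ with $\partial j(v)\subset\{\zeta\in L^2(\Omega)\mid \zeta(x)\in\partial h(x,v(x))\text{ a.e.}\}$, and the growth bound in (\ref{functionj})(c) yields
\[
\|\partial j(v)\|_X \le \sqrt{2}\,\overline c_0\, |\Omega|^{1/2} + \sqrt{2}\,\overline c_1\, \|v\|_X,
\]
so $H(j)$ holds with $\alpha = \sqrt{2}\,\overline c_0\,|\Omega|^{1/2}$ and $\beta = \sqrt{2}\,\overline c_1$. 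Hence $(H_0)$ reads $\alpha_a > \sqrt{2}\,\overline c_1 \|i\|^2$, which is exactly the smallness condition (\ref{semi_smallness}). Finally $f\in V^*$ since $g_1\in L^2(\Omega)$.

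The main obstacle is $H(K)$: verifying that the solution-dependent constraint $U$ has nonempty, closed, convex values, is weakly Mosco continuous, and meets a bounded set. Since $r$ is positively homogeneous and subadditive it is sublinear and hence convex; combined with its lower semicontinuity this makes $U(u)$ closed and convex for every $u$. From $r(0)\le \rho\le m(u)$ we get $0\in U(u)$, so $U(u)\cap Q\neq\emptyset$ with the bounded set $Q=\{0\}$. For the weak Mosco continuity, fix $u_n\rightharpoonup u$ in $V$; the compactness of $V\hookrightarrow L^2(\Omega)$ and continuity of $m$ give $m(u_n)\to m(u)$. Condition (m1) follows from weak lower semicontinuity of $r$: if $w_k\in U(u_{n_k})$ and $w_k\rightharpoonup w$ in $V$, then $r(w)\le\liminf r(w_k)\le\liminf m(u_{n_k})=m(u)$. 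For (m2), given $w\in U(u)$, we interpolate between $w$ and $0$ by setting $t_n = \min\{1,\,m(u_n)/r(w)\}$ (and $t_n=1$ if $r(w)\le 0$) and $w_n = t_n w$; positive homogeneity gives $r(w_n)=t_n r(w)\le m(u_n)$, so $w_n\in U(u_n)$, while $m(u_n)\to m(u)\ge r(w)$ forces $t_n\to 1$ and hence $w_n\to w$ in $V$.

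With all hypotheses in force, Theorem~\ref{Theorem1} produces $u\in V$ with $u\in U(u)$ satisfying
\[
\langle Au - f, z-u\rangle + \varphi(z) - \varphi(u) + j^0(Mu;Mz-Mu)\ge 0\qquad\text{for all }z\in U(u).
\]
Using the standard inequality $j^0(iu;iz - iu)\le \int_\Omega h^0(x,u(x);z(x)-u(x))\, dx$ for the integral functional on $L^2(\Omega)$, such a $u$ solves Problem~\ref{problemsemi2}. Compactness of the solution set in $V$ is then immediate from Theorem~\ref{Theorem2}.
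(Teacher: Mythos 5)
Your proposal is correct and follows essentially the same route as the paper: identify Problem~\ref{problemsemi2} with the abstract Problem~\ref{mainproblem1} via $C=V$, $X=L^2(\Omega)$, $M=i$, verify $H(A)$, $H(\varphi)$, $H(j)$ (with $\beta=\sqrt{2}\,\overline c_1$, so $(H_0)$ is exactly (\ref{semi_smallness})), and $H(K)$ with $Q=\{0_V\}$, then apply Theorems~\ref{Theorem1} and~\ref{Theorem2}. The only cosmetic difference is your recovery sequence $w_n=\min\{1,\,m(u_n)/r(w)\}\,w$ for the Mosco condition, where the paper scales by $m(v_n)/m(v)$; both rest on the positive homogeneity of $r$, the continuity of $m$, and the compact embedding, so the arguments are interchangeable.
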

\begin{proof}
We apply Theorems~\ref{Theorem1} and~\ref{Theorem2}
with the following data: $C = V$,
$X = L^2(\Omega)$,
$K(\cdot) = U(\cdot)$, $f = g_1$ and
\begin{eqnarray}
A \colon V\to V^*, \quad
&&\langle A u,v \rangle =
\int_{\Omega} a(x, \nabla u(x)) \cdot \nabla v(x)\, dx
\ \ \mbox{for} \ \ u, v \in V, \label{semiA1}
\\ [2mm]
\varphi \colon V\to \mathbb{R}, \quad
&&\varphi(v)=
\int_{\Gamma_{2}} k(x,v(x)) \, d\Gamma
\ \ \mbox{for}\ \ v \in V,\label{semiB1}
\\ [1mm]
j \colon X \to \mathbb{R}, \quad
&&j(w)=
\int_{\Omega} h(x,w(x))\, dx \ \ \mbox{for} \ \
w \in X,\label{semij1} \\[1mm]
M \colon V \to X, \quad
&&
M = i \colon V \to X \ \ \mbox{the embedding map}. \label{semiM1}
\end{eqnarray}

First, we note that by~\cite[Theorem 3.47]{MOSbook} and hypothesis (\ref{functionj}), $j\colon X \to \real$ is a locally Lipschitz function such that
\begin{eqnarray}
%%\left\{\begin{array}{lll}
&&
\label{*}
\displaystyle j^0(w;z)\le \int_\Omega h^0(x,w(x);z(x))\,dx, \\[2mm]
&&
\label{**}
\displaystyle\partial j(w)\subset \int_\Omega \partial h(x,w(x))\,dx
%%\end{array}\right.
\end{eqnarray}
for all $w$, $z\in X$.
Under our notation, from (\ref{*}) it is clear that any solution of Prob\-lem~\ref{mainproblem1} is also a solution of  Problem~\ref{problemsemi2}. Based on this fact, we are going to check the validity of the hypotheses
$H(A)$, $H(C)$, $H(j)$, $H(K)$, $H(M)$,
$H(\varphi)$, $H(f)$, and $(H_0)$
of Theorem~\ref{Theorem1} and show the existence of a solution to  Problem~\ref{problemsemi2}.

Second, by virtue of the definition of $A$,
see (\ref{semiA1}), and conditions (\ref{functiona})(a)--(b), we can see that $A$ is continuous. Besides, hypothesis (\ref{functiona})(d) reveals that $A$ is strongly monotone with constant $m=m_a$. The boundedness of $A$ can be verified directly by applying H\"older inequality and (\ref{functiona})(c). This means that $A$ is pseudomonotone (see~\cite[Proposition 3.69]{MOSbook}).
Thus $H(A)$ holds.

Employing the growth condition (\ref{functionj})(c),
inclusion (\ref{**}), and the H\"older inequality,
we deduce that
$j$ satisfies condition $H(j)$ with
$\beta =\overline c_1\sqrt{2}$.
From the definition of $\varphi$, we can see that it is convex and continuous which implies $H(\varphi)$.
Furthermore, condition $H(M)$ holds by the well-known properties of the embedding operator.
It is also obvious that $H(C)$ and $H(f)$ are satisfied.

Next, we shall prove that the multifunction
$K$ fulfills hypothesis $H(K)$.
Set $Q=\{0_V\}$ which is a zero element of $V$.
Because of the condition
$r(0)\le \rho:=\inf_{v\in L^2(\Omega)}m(v)$,
for each $v\in V$, we have
\begin{equation*}
0_V\in K(v)\cap Q\ \ \mbox{for all} \ \ v\in V.
\end{equation*}
For any $v\in V$ fixed, let $u$, $w\in K(v)$ and $\lambda \in(0,1)$ be arbitrary. The convexity of $r$ (since $r$ is positively homogeneous and subadditive) entails
\begin{equation*}
r(\lambda u+(1-\lambda)w)\le \lambda r(u)+(1-\lambda)r(w)
\le \lambda m(v)+(1-\lambda) m(v) = m(v),
\end{equation*}
this means $\lambda u+(1-\lambda)w\in K(v)$, i.e.,
$K(v)$ is convex.
Let $\{u_n\}\subset K(v)$ be such that $u_n\to u$ as $n\to \infty$ for some $u\in V$. The continuity of $r$ implies
\begin{equation*}
r(u)\le \liminf_{n\to \infty} r(u_n)\le m(v).
\end{equation*}
Hence, $K(v)$ is a closed set for each $v\in V$.
Thus, the multifunction $K \colon V \to 2^V$ has nonempty, closed, and convex values.

Next, we fix a sequence $\{v_n\}\subset V$ which is such that
\begin{equation}
v_n\to v\ \mbox{ weakly in $V$ as $n\to \infty$}
\end{equation}
for some $v\in V$.
We shall verify that $K(v_n) \, \stackrel{M} {\longrightarrow} \, K(v)$ by checking the conditions (m1) and (m2) of Definition~\ref{DefMosco}.
For the proof of (m1),
let $u\in K(v)$ be arbitrary and we set  $u_n=\frac{m(v_n)}{m(v)}u$.
Then,
from the positive homogeneity of $r$ and the condition $\rho>0$, it follows
\begin{equation*}
r(u_n)=\frac{m(v_n)}{m(v)} r(u)\le m(v_n),
\end{equation*}
which gives
$u_n\in K(v_n)$ for every $n\in\mathbb N$.
Since $V$ embeds into $X$ compactly, and $m$ is continuous,
a short calculation gives
\begin{equation*}
\lim_{n\to \infty}\|u_n-u\|
= \lim_{n\to \infty}\left\|\frac{m(v_n)}{m(v)}u-u\right\|
=\lim_{n\to \infty}\frac{|m(v_n)-m(v)|}{m(v)}\|u\|=0,
\end{equation*}
and so, $u_n\to u$ in $V$ as $n\to \infty$. This proves (m1).
Next, we show the condition (m2).
Let
$\{u_n \}\subset V$ be such that
\begin{equation*}
u_n\in K(v_n),\mbox{ and $u_n \to u$ weakly in $V$
as $n\to \infty$}
\end{equation*}
for some $u \in V$.
The inequalities
\begin{equation*}
r(u)\le \liminf_{n\to \infty}r(u_n)
\le \liminf_{n\to \infty}m(v_n)=m(v),
\end{equation*}
imply $u\in K(v)$, where we have used the weak lower semicontinuity of $r$, the continuity of $m$ and the compactness of the embedding of $V$ into $X$. Hence (m2) follows. This means that condition $H(K)$ holds.
Additionally, $(H_0)$ is a consequence of (\ref{semi_smallness}).

Therefore, all hypotheses of Theorem~\ref{Theorem1} are verified. We are now in a position to invoke this theorem to conclude that the set of solutions to Problem~\ref{mainproblem1} is nonempty. This means that Problem~\ref{problemsemi2} has at least one solution. Moreover, arguing as in the proof of Theorem~\ref{Theorem2}, we can demonstrate that the solution set of Problem~\ref{problemsemi2} is compact in $V$.
\end{proof}

\begin{Example}
Let $r\colon V\to\real$ and $m\colon L^2(\Omega)\to \real$
be defined by
\begin{equation*}
r(v):=\int_\Omega|\nabla v(x)|\varrho_1(x)\,dx
\ \ \mbox{\rm for all} \ \ v \in V,
\end{equation*}
and
\begin{equation*}
m(v):=m_0+\int_\Omega|v(x)|\varrho_2(x)\,dx
\ \ \mbox{\rm for all} \ \ v \in L^2(\Omega),
\end{equation*}
where $\varrho_1$, $\varrho_2 \in L^2(\Omega)$,
$\varrho_1$, $\varrho_2 \ge 0$ and $m_0 > 0$.
It is not difficult to prove that the functions
$r$ and $m$ satisfy the condition (\ref{HYPOTU}).
\end{Example}

We complete this section by presenting another mixed boundary value problem with a unilateral condition on a part of the boundary
to which our abstract results of Section~\ref{section3} could be applied.
This model describes a boundary and interior semipermeability problem.
	
Let $\Omega$ be a bounded domain of $\real^d$, $d=2$, $3$ with Lipschitz continuous boundary $\partial \Omega = \Sigma$ which consists of disjoint measurable parts $\Sigma_{1}$, $\Sigma_2$ and $\Sigma_{3}$ such that
$\Sigma = {\bar{\Sigma}}_1 \cup {\bar{\Sigma}}_2 \cup {\bar{\Sigma}}_3$ and $m(\Sigma_{1}) > 0$.
\begin{Problem}\label{problemsemi3}
	Find a function $u \colon \Omega \to \mathbb{R}$ such that $u \in U(u)$ and
	\begin{eqnarray*}
	-{\rm div}\, a(x, \nabla u(x))+\partial_cp(x,u(x)) \ni g_1(x)
	&&\ \ \mbox{\rm in} \ \ \Omega,
	\\ [2mm]
	u = 0  &&\ \ \mbox{\rm on} \ \ \Sigma_{1},  \\[2mm]
	-\frac{\partial u(x)}{\partial \nu_a}
	\in \partial h_2 (x, u(x)) &&\ \ \mbox{\rm on} \ \ \Sigma_{2},\\[2mm]
	\left\{\begin{array}{lll}
    u(x)\le k_2(x),\\[2mm]
	\displaystyle\frac{\partial u(x)}{\partial \nu_a}\le 0,\\[2mm]
    \displaystyle\frac{\partial u(x)}{\partial \nu_a}(u(x)-k_2(x))=0,
    \end{array}\right.
&&\ \ \mbox{\rm on} \ \ \Sigma_{3}, \\
%\\[1mm]
	%u \in U && ~~
	%\label{semi3}
	\end{eqnarray*}
where $U\colon C\to 2^C$ is defined by
\begin{equation*}\label{defUs}
U(u):=\{ \, w \in C \mid r(w)\le m(u) \, \},
\end{equation*}
and the set $C\subset V$ is given by
\begin{equation*}
C:=\{ \, v\in V \mid v(x)\le k_2(x)
\ \ \mbox{\rm on}\ \ \Sigma_3 \, \}.
\end{equation*}
\end{Problem}

We impose the following assumptions on the data.
\begin{eqnarray}
\left
\{\begin{array}{ll}
h_2 \colon \Sigma \times \real \to \real \ \mbox{is such that}
\\ [2mm]
\ \ {\rm (a)} \
h_2(\cdot,r) \ \mbox{is measurable on} \ \Sigma
\ \mbox{for all} \ r \in \mathbb{R}
\ \mbox{and there} \ \ \ \\ [2mm]
\ \ \qquad \mbox{exists} \
\overline{e}\in L^{2}(\Sigma) \ \mbox{such that} \
h_2(\cdot,\overline{e}(\cdot))\in L^{1}(\Sigma). \\ [2mm]
\ \ {\rm (b)} \
h_2(x,\cdot) \ \mbox{is locally Lipschitz on} \ \mathbb{R},
\ \mbox{ for a.e.} \ x \in \Sigma. \\ [2mm]
\ \ {\rm (c)} \
\mbox{there exist} \ \overline{c}_{0},\, \overline{c}_{1}\geq 0
\ \mbox{such that} \\ [2mm]
\ \ \qquad |\partial h_2(x,r)|\le
\overline{c}_{0} + \overline{c}_{1}| r | \ \mbox{for all}
\  r \in \mathbb{R}, \ \mbox{a.e.} \
x \in \Sigma.
\end{array}
\right.
\label{functionj2}
\end{eqnarray}
\begin{eqnarray}
\left\{
\begin{array}{ll}
p \colon \Omega \times \real \to \real
\ \mbox{is such that}
\\ [2mm]
\ \ {\rm (a)} \
p(\cdot,r) \ \mbox{is measurable on} \ \Omega
\ \mbox{for all} \ r \in \mathbb{R}. \qquad\qquad \ \ \\ [2mm]
\ \ {\rm (b)} \
p(x,\cdot) \ \mbox{is convex and continuous on} \ \real,
\ \mbox{a.e.} \ x \in \Omega. \\ [2mm]
\ \ {\rm (c)}\
\mbox{$x\mapsto p(x,v(x))$ belongs to $L^1(\Omega)$ for each $v\in L^2(\Omega)$.}
\end{array}
\right.
\label{functionps}
\end{eqnarray}
\begin{equation}\label{funck2}
 k_2\in L^2(\Sigma_3), \ k_2 \ge 0 \mbox{ and }k_2\not\equiv0.
\end{equation}

Using the Green formula, we arrive at the variational formulation of Problem~\ref{problemsemi3}.
\begin{Problem}\label{problemsemi4}
	Find $u \in C$ such that $u \in U(u)$ and
	\begin{eqnarray*}
		&&
		\int_{\Omega} a(x, \nabla u(x)) \cdot \nabla (z(x)-u(x)) \, dx +
		\int_{\Sigma_{2}}
	h_2^0(x, u(x);z(x)-u(x)) \, d\Gamma \\ [1mm]
		&& \qquad +\int_{\Omega} p(x, z(x))-p(x,u(x)) \, dx \ge
		\int_{\Omega} g_1(x) (z(x) - u(x))\, dx
	\end{eqnarray*}
for all $z \in U(u)$.
\end{Problem}

For Problem~\ref{problemsemi4} we need the following notation.
Let the space $V$ be defined by (\ref{spaceV}),
$X = L^2(\Sigma)$,
$M = \gamma \colon V \to X$ be the trace operator,
$A \colon V \to V^*$ be an operator given by (\ref{semiA1}),
$f = g_1$, and
$K(\cdot) = U(\cdot)$. Let the functions
$\varphi \colon V \to \real$ and
$j \colon X \to \real$ be defined by
$$
\varphi(v)=
\int_{\Omega} p(x,v(x)) \, dx
\ \ \mbox{for}\ \ v \in V,
$$
$$
j(w)=
\int_{\Sigma_2} h_2(x,w(x))\, d\Gamma \ \ \mbox{for} \ \
w \in X,
$$
respectively.
Under the above notation Problem~\ref{problemsemi4}
enters the abstract framework of Section~\ref{section3}.
Arguing as in the proof of Theorem~\ref{existencesemi4}, we obtain the following result.
\begin{Theorem}\label{existencesemi5}
	Assume that conditions   (\ref{functiona}), (\ref{HYPOTU}), and (\ref{functionj2})--(\ref{funck2}) are satisfied and the following smallness condition holds
	\begin{equation*}\label{semi_smallness2}
	\overline c_1\sqrt{2}\, \| \gamma\|^2 < \alpha_{a}.
	\end{equation*}
%    where $\gamma$ is the trace operator from $V$ to %$L^2(\Sigma)$.
	Then, Problem~\ref{problemsemi4} has a nonempty and compact solution set in $V$.
\end{Theorem}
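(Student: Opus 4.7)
The plan is to apply Theorems~\ref{Theorem1} and~\ref{Theorem2} to Problem~\ref{mainproblem1} instantiated with the data specified immediately before the statement: $V$, $C$, $X=L^2(\Sigma)$, $M=\gamma$, $A$, $f=g_1$, $\varphi$, $j$, and $K=U$. Exactly as in the proof of Theorem~\ref{existencesemi4}, an application of~\cite[Theorem~3.47]{MOSbook} to $j(w)=\int_{\Sigma_2}h_2(x,w(x))\,d\Gamma$ will give
$$
j^0(\gamma u;\gamma z-\gamma u)\le \int_{\Sigma_2}h_2^0(x,u(x);z(x)-u(x))\,d\Gamma,
$$
so that every solution of the abstract Problem~\ref{mainproblem1} automatically solves Problem~\ref{problemsemi4}. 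The task then reduces to verifying $H(A)$, $H(C)$, $H(j)$, $H(K)$, $H(M)$, $H(\varphi)$, $H(f)$, and $(H_0)$.

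Most of these hypotheses reduce to the same checks made in Theorem~\ref{existencesemi4}. Condition~(\ref{functiona}) yields $H(A)$ with $m=\alpha_a$ (boundedness, continuity, strong monotonicity, and hence pseudomonotonicity by~\cite[Proposition~3.69]{MOSbook}). The functional $\varphi(v)=\int_\Omega p(x,v(x))\,dx$ is convex and continuous on $V$ by~(\ref{functionps}), giving $H(\varphi)$. The set $C$ is nonempty (since $k_2\ge 0$ forces $0\in C$), closed and convex, so $H(C)$ holds. The trace operator $\gamma\colon V\to L^2(\Sigma)$ is linear and compact, giving $H(M)$. The growth $|\partial h_2(x,r)|\le \overline c_0+\overline c_1|r|$ combined with~\cite[Theorem~3.47]{MOSbook} gives $\|\partial j(w)\|_X\le \alpha+\overline c_1\sqrt{2}\,\|w\|_X$, so $H(j)$ holds with $\beta=\overline c_1\sqrt{2}$, and the assumed inequality $\overline c_1\sqrt{2}\,\|\gamma\|^2<\alpha_a$ is precisely $(H_0)$. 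Finally, $H(f)$ is immediate from $g_1\in L^2(\Omega)\hookrightarrow V^*$.

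The main obstacle is $H(K)$, because the unilateral constraint built into $C$ obstructs the pure scaling argument used for Theorem~\ref{existencesemi4}. The choice $Q=\{0\}$ handles the boundedness requirement since $0\in K(u)\cap Q$ for every $u\in C$, and closedness and convexity of $K(u)\subset C$ are immediate from~(\ref{HYPOTU}). For the weak Mosco continuity, fix $v_n\to v$ weakly in $V$. Condition~(m1) of Definition~\ref{DefMosco} follows as in Theorem~\ref{existencesemi4} from the weak lower semicontinuity of $r$, continuity of $m$, the compact embedding of $V$ into $L^2(\Omega)$, and the weak closedness of $C$. For condition~(m2), given $u\in K(v)$, the naive recovery $\frac{m(v_n)}{m(v)}u$ may exceed $k_2$ on $\Sigma_3$ when $m(v_n)>m(v)$, so I would replace it by
$$
u_n=\theta_n u, \qquad \theta_n=\min\Big\{1,\tfrac{m(v_n)}{r(u)}\Big\},
$$
with the convention $\theta_n=1$ when $r(u)=0$. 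Since $\theta_n\in[0,1]$ and $k_2\ge 0$, a pointwise check using $u\le k_2$ on $\Sigma_3$ yields $\theta_n u\le u^+\le k_2$ on $\Sigma_3$, so $u_n\in C$. The positive homogeneity of $r$ gives $r(u_n)=\theta_n r(u)\le m(v_n)$, hence $u_n\in K(v_n)$, while continuity of $m$ and the compact embedding of $V$ into $L^2(\Omega)$ imply $m(v_n)\to m(v)\ge r(u)$, so $\theta_n\to 1$ and $u_n\to u$ in $V$.

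With $H(K)$ established, Theorem~\ref{Theorem1} provides a solution to Problem~\ref{mainproblem1}, which by the $j^0$ inequality recalled in the first paragraph is simultaneously a solution to Problem~\ref{problemsemi4}, and Theorem~\ref{Theorem2} yields the compactness of the solution set in $V$.
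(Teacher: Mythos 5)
Your proposal is correct and follows the route the paper itself takes: the paper's proof of this theorem is literally the one line ``arguing as in the proof of Theorem~\ref{existencesemi4}, we obtain the result,'' i.e.\ verify $H(A)$, $H(C)$, $H(j)$, $H(K)$, $H(M)$, $H(\varphi)$, $H(f)$, $(H_0)$ for the data $C$, $X=L^2(\Sigma)$, $M=\gamma$, $K=U$, $\varphi$, $j$ as listed before the statement, use \cite[Theorem~3.47]{MOSbook} to pass from $j^0$ to $\int_{\Sigma_2}h_2^0\,d\Gamma$, and invoke Theorems~\ref{Theorem1} and~\ref{Theorem2}; all of your verifications match (including the correct identification of the strong monotonicity constant as $\alpha_a$, where the proof of Theorem~\ref{existencesemi4} has the slip ``$m=m_a$''). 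The one place where you genuinely depart from a literal repetition of that argument is the Mosco recovery sequence in $H(K)$, and your departure is warranted: the scaling $u_n=\frac{m(v_n)}{m(v)}u$ used for Theorem~\ref{existencesemi4} (where $C=V$) need not remain in $C=\{v\in V\mid v\le k_2 \mbox{ on }\Sigma_3\}$ when $m(v_n)>m(v)$, a point the paper's one-line proof glosses over. Your truncated scaling $u_n=\theta_n u$ with $\theta_n=\min\{1,m(v_n)/r(u)\}$ repairs this cleanly: $\theta_n\in[0,1]$ together with $u\le k_2$ and $k_2\ge 0$ gives $\theta_n u\le u^+\le k_2$ on $\Sigma_3$, positive homogeneity gives $r(u_n)=\theta_n r(u)\le m(v_n)$, and $m(v_n)\to m(v)\ge r(u)$ (by the compact embedding and continuity of $m$) gives $\theta_n\to 1$, hence $u_n\to u$ in $V$; the case $r(u)=0$ is covered by your convention since $m(v_n)\ge\rho>0$. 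So your argument is not only consistent with the paper's but slightly more careful at the one step where care is actually needed.
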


\section*{Acknowledgements}
Project is supported by the European Union's Horizon 2020 Research and Innovation 
Programme under the Marie Sk{\l}odowska-Curie grant agreement No. 823731 CONMECH, 
National Science Center of Poland under Preludium Project No. 2017/25/N/ST1/00611, 
NNSF of China Grant Nos. 12001478 and 12026256, and the Startup Project of Doctor 
Scientific Research of Yulin Normal University No. G2020ZK07. It is also supported by 
Natural Science Foundation of Guangxi Grant Nos. 2021GXNSFFA196004, 2020GXNSFBA297137 
and 2018GXNSFAA281353, and by the Beibu Gulf University under project No. 2018KYQD06. 
The first author is also supported by the projects financed by the Ministry of Science and Higher 
Education of Republic of Poland under Grants Nos. 4004/GGPJII/H2020/2018/0 and
440328/PnH2/2019, and by the National Science Centre of Poland under Project
No. 2021/41/B/ST1/01636.		

%\
%
%{\color{red}
%	
%Questions:
%
%\
%
%Can we generalize the method to the case
%$K\colon C \to 2^C$ with $C \subset V$.
%
%\
%
%Add to Introduction: The method of the proof is different from the previous works.
%We do not assume the relaxed monotonicity condition of the subgradient operator.
%
%\
%
%Can we consider the case when
%$M$ is the trace operator from $V$ to
%$X=L^2(\Gamma; \real^d)$.
%
%\
%
%Example of a function $j$ for which $H(j)$ holds and the relaxed monotonicity condition is not satisfied.
%
%\
%
%Example of the set $K(\cdot)$ of constraints for which we have the Mosco convergence.
%
%\
%
%How to define the set $U(\cdot)$ in Applications.
%
%\
%
%}

\end{document}